\title{An unstable change of rings for Morava E-theory}
\author{Robert Thompson}
\address{Department of Mathematics and Statistics, Hunter College and the Graduate Center, CUNY, 
New York, NY 10065}
\email{robert.thompson@hunter.cuny.edu}
\numberwithin{equation}{section}
\newtheorem{theorem}[equation]{Theorem}
\newtheorem{definition}[equation]{Definition}
\newtheorem{proposition}[equation]{Proposition}
\newtheorem{lemma}[equation]{Lemma}
\newtheorem{corollary}[equation]{Corollary}
\DeclareMathOperator{\ext}{Ext}
\DeclareMathOperator{\Hom}{Hom}
\DeclareMathOperator{\End}{End}
\DeclareMathOperator{\coker}{coker}
\DeclareMathOperator{\spec}{Spec}
\DeclareMathOperator{\Gal}{Gal}
\DeclareMathOperator{\Map}{Map}
\DeclareMathOperator{\cotensor}{\Box}
\begin{document}

\begin{abstract}
The Bousfield-Kan (or unstable Adams) spectral sequence can be constructed for various homology theories such as Brown-Peterson homology theory BP, Johnson-Wilson theory $E(n)$, or Morava $E$-theory $E_n$. For nice spaces the $E_2$-term is given by Ext in a category of unstable comodules.    We establish an unstable Morava change of rings isomorphism between \break $\text{Ext}_{\mathcal{U}_{\Gamma_B}}(B,M)$ and  
$\text{Ext}_{\mathcal{U}_{E_{n*}E_n}/I_{n}}(E_{n*}/I_{n},E_{n*}\otimes_{BP_*} M)$
where $(B,\Gamma_B)$ denotes the Hopf Algebroid $(v_n^{-1}BP_*/I_{n}, v_n^{-1}BP_*BP/I_{n}   )$.   We show that the latter groups can be interpreted as $\text{Ext}$ in the category of continuous modules over the profinite monoid of endomorphisms of the Honda formal group law. By comparing this with the cohomology of the Morava stabilizer group we obtain an unstable Morava vanishing theorem when $p-1 \nmid n$.   
\end{abstract}

\maketitle

%
%
\section{Introduction}

In \cite{BCM} it is shown that the unstable Adams spectral sequence, as formulated by Bousfield and Kan \cite{BK}, can be used with a generalized homology theory represented by a $p$-local ring spectrum $E$ satisfying certain hypotheses, and for certain spaces $X$. The main example is $E=BP$. In these cases the effectiveness of the spectral sequence is demonstrated by: 1) setting up the spectral sequence and proving convergence, 2) formulating a general framework for computing the $E_2$-term, and 3)  computing the one and two line in the case where $E=BP$ and $X=S^{2n+1}$. 

In \cite{BT} the present author and M. Bendersky showed that this framework can be extended to periodic homology theories such as the Johnson-Wilson spectra $E(n)$.  However the approach to convergence in \cite{BT} is different from that in \cite{BCM}.  In the latter the Curtis convergence theorem is used to obtain a general convergence theorem based on the existence of a Thom map \[E\ \to H\mathbf{Z}_{(p)}\] and a tower over $X$.  This necessitates that $E$ be connective.  Obviously this doesn't apply to periodic theories such as $E(n)$. In \cite{BT} we study a tower under $X$ and define the $E$-completion of $X$ to be the homotopy inverse limit of this tower.   Convergence of the spectral sequence to the completion is guaranteed by, for example, a vanishing line in the $r$th term of the spectral sequence.  For the example of $E(1)$ and $X = S^{2n+1}$ we compute the $E_2$-term, for $p$ odd, and obtain such a vanishing line. 

It should be noted that the spectral sequence has been used to good effect in the work of Davis and Bendersky, in computing $v_1$-periodic homotopy groups of Lie Groups.  It should also be noted that the construction of an $E$-completion given in \cite{BT} has been strongly generalized by Bousfield in \cite{BO9}.     Also, the framework for the construction of the spectral sequence and the computation of the $E_2$-term in  \cite{BCM} and 
\cite{BT} has been generalized by Bendersky and Hunton in \cite{BH} to the case of an arbitrary Landweber Exact ring spectrum $E$.  This includes complete theories such as Morava $E$-theory.

In \cite{BH} the authors define an $E$-completion of $X$, and a corresponding Bousfield-Kan spectral sequence, for any space $X$ and any ring spectrum $E$, generalizing the construction of \cite{BT}.   If one further supposes that $E$ is a Landweber exact spectrum then the authors show that one can define a category of unstable comodules over the Hopf algebroid $(E_*, E_*(E))$.   This is accomplished by studying the primitives and indecomposables in the Hopf ring of $E$, extending the work of \cite{BCM}, \cite{BE5}.   Letting $\mathcal{U}$ denote this category of unstable comodules they show, for example, that if $X$ is a space such that $E_*(X) \cong \Lambda(M)$, an exterior algebra on the $E_*$-module $M$ of primitives, where $M$ is a free $E_*$-module concentrated in odd degrees, then the $E_2$-term of the spectral sequence can be identified as 
\[E_2^{s,t}(X) \cong \ext^s_{\mathcal{U}}(E_*(S^t),M).\]
This is Theorem 4.1 of \cite{BH}.  In the literature $E_{*}(S^t)$ is often abbreviated to $E_{*}[t]$ and this bigraded Ext group is denoted by the shorthand
\[  \ext^{s}_{\mathcal{U}}(E_{*}[t],M) \,\, \text{or even} \,\, \ext^{s,t}_{\mathcal{U}}(M).\]

There remains the problem of convergence and the problem of computing the $E_2$-term.    In this paper, following work on the case of Morava $K$-theory in D. Mulcahey's thesis \cite{MUL}, we extend the definition of an unstable comodule to certain torsion Hopf algebroids, and establish bounds on the cohomological dimension of the unstable Ext groups.
This involves an unstable version of the Morava change of rings theorem going from $v_{n}^{-1}BP/I_{n}$ to Morava $K$-theory, and then identification of the unstable cohomology as Ext groups in the category of continuous modules over $\End_n$, the profinite monoid of endomorphisms of $\Gamma_n$, the Honda formal group law, over $\mathbf{F}_{p^n}$.  The multiplication in $\End_n$ is given by composition.   The group of invertible endomorphisms is the well known Morava stabilizer group, and Morava theory tells us that the continuous cohomology of this group yields stable input into the chromatic machinery of stable homotopy theory.   Unstable information is obtained by considering non-invertible endomorphisms of $\Gamma_n$ as well. 

In the following theorem, the group on the left is Ext in the category of unstable comodules over the Hopf algebroid 
\[(B,\Gamma_{B}) = (B(n)_*,\Gamma_{B(n)_*}) = (v_{n}^{-1}BP_{*}/I_{n}, v_{n}^{-1}BP_*BP/I_{n}),\] 
and the group on the right is continuous Ext over the monoid 
$\End_n$, where $\Gal$ denotes the Galois group $\Gal(\mathbf{F}_{p^n}/\mathbf{F}_{p})$ and $E_{n*}$ is the coefficient ring of Morava $E$-theory.  We will denote  the unstable comodule which is the homology of the sphere by
\[B[k] = B(n)_*(S^{k}) = v_n^{-1}BP_*(S^{k})/I_{n} .\]
Let $M$ be an unstable $\Gamma_{B}$-comodule concentrated in odd dimensions.


\begin{theorem}\label{first-main-theorem} 
There is an isomorphism
\[ \ext^{s}_{\mathcal{U}_{    \Gamma_{B}   }}(B[t],M)   \cong \ext^{s}_{\End_n}(     (E_{n})_{1}[t] /I_{n}      , (E_{n*}\otimes_{BP_*} M)_{1})^{\Gal}.
\]
\end{theorem}

In Section \ref{cohomological-dimension} we establish a relationship between the Ext groups over $\End_n$ and the cohomology of $S_n$, the Morava stabilizer group.  Using the cohomological dimension of $S_n$ (see \cite{RA2}) we obtain an unstable Morava vanishing theorem.


\begin{theorem}\label{second-main-theorem}
Let $\Gamma_B$ and $M$ be as in Theorem \ref{first-main-theorem}.   Suppose $p-1 \nmid n$.  Then 
\[ \ext^{s}_{\mathcal{U}_{\Gamma_{B}}}(M[t],M)   = 0\quad \text{for}\quad  s > n^2+1\]
\end{theorem}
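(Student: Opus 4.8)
The plan is to feed Theorem~\ref{first-main-theorem} into a monoid-cohomology computation governed by the arithmetic of $\End_n$. By Theorem~\ref{first-main-theorem}, for $M$ as in the hypotheses,
\[\ext^{s}_{\mathcal{U}_{BP_*BP}}(BP_*,M) \;\cong\; H^{s}_c(\End_n, N)^{\Gal}, \qquad N := (E_{n*}\otimes_{BP_*}M)_1 .\]
Since $(-)^{\Gal}$ is a subfunctor of the identity on abelian groups, it is enough to prove $H^{s}_c(\End_n, N) = 0$ for $s > n^2+1$. Note moreover that $N$ is an $\mathbf{F}_p$-vector space: the hypothesis $I_nM = 0$ forces $pM = 0$ (as $p\in I_n$), hence $pN = 0$; so all cohomology in sight is $p$-torsion, and it suffices to bound continuous cohomological dimension at $p$.

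Next I would use that $\End_n = \mathcal{O}_n$, the maximal order in the central division algebra $D_n$ over $\mathbf{Q}_p$ of Hasse invariant $1/n$, a complete (noncommutative) discrete valuation ring with maximal ideal $\mathfrak{m} = \pi\mathcal{O}_n$, where $\pi$ is a uniformizer with $\pi^{n}=p$ (realized by the Frobenius isogeny), and with $\mathcal{O}_n^{\times} = S_n$ the Morava stabilizer group. The nonzero endomorphisms then decompose by valuation as $\mathcal{O}_n\setminus\{0\} = \coprod_{k\ge 0} S_n\,\pi^{k}$, so that, as a multiplicative monoid, $\mathcal{O}_n\setminus\{0\}$ is the semidirect product $S_n\rtimes\mathbf{N}$ in which the generator of the additive monoid $\mathbf{N}$ acts on $S_n$ by conjugation by $\pi$. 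The first task is to compare $H^{*}_c(\End_n, N)$ with $H^{*}_c(S_n\rtimes\mathbf{N}, N)$: the zero endomorphism is an absorbing element of the monoid, and one must check — keeping track of the profinite topology and of how $0$ operates on $N$ — that adjoining it to $S_n\rtimes\mathbf{N}$ does not increase the cohomological dimension, either because $H^{*}_c$ is unchanged or via a long exact sequence whose remaining term is again controlled by $S_n\rtimes\mathbf{N}$.

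It then remains to bound $\mathrm{cd}_p(S_n\rtimes\mathbf{N})$. The monoid $\mathbf{N}$ is free on one generator, so $B\mathbf{N}\simeq S^1$ and $H^{i}_c(\mathbf{N},-)=0$ for $i\ge 2$. Because $p-1\nmid n$, the field $\mathbf{Q}_p(\zeta_p)$, of degree $p-1$, does not embed in $D_n$, so $S_n$ has no element of order $p$; by Lazard's theorem $\mathrm{cd}_p(S_n) = \dim_{\mathbf{Q}_p}S_n = n^{2}$ — this is the input we take from \cite{RA2}. Feeding these facts into the Lyndon--Hochschild--Serre spectral sequence
\[E_2^{p,q} \;=\; H^{p}_c\!\bigl(\mathbf{N},\, H^{q}_c(S_n, N)\bigr) \;\Longrightarrow\; H^{p+q}_c(S_n\rtimes\mathbf{N}, N)\]
associated to the quotient monoid homomorphism $S_n\rtimes\mathbf{N}\to\mathbf{N}$ with kernel $S_n$, we get $E_2^{p,q}=0$ unless $p\le 1$ and $q\le n^{2}$; hence $H^{s}_c(S_n\rtimes\mathbf{N}, N)=0$ for $s>n^{2}+1$, and therefore $H^{s}_c(\End_n, N)=0$ for $s>n^{2}+1$, as required. (The bound $n^2+1$ is exactly $\mathrm{cd}_p$ of $D_n^{\times}=S_n\rtimes\mathbf{Z}$, of which $\End_n\setminus\{0\}$ is the submonoid of nonnegative valuation.)

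The step I expect to be the main obstacle is the monoid comparison in the second paragraph: passing from the full endomorphism monoid $\End_n$ — with its absorbing zero and its non-invertible isogenies $\pi^{k}$ acting by non-invertible operators on $N$ — to the semidirect product $S_n\rtimes\mathbf{N}$, with correct bookkeeping for the profinite topology and the twisted coefficients. Once that identification is in hand the dimension count is routine; the only point requiring care there is the observation that $N$ is $p$-torsion, so that Lazard's bound $\mathrm{cd}_p(S_n)=n^{2}$ genuinely governs $H^{*}_c(S_n,N)$, which we have arranged through the hypothesis $I_nM=0$.
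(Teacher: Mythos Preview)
Your reduction via Theorem~\ref{first-main-theorem} and your appeal to $\mathrm{cd}_p(S_n)=n^2$ when $p-1\nmid n$ match the paper exactly; the difference is in how the extra ``$+1$'' is produced. The paper does not decompose $\End_n$ as $(S_n\rtimes\mathbf{N})\cup\{0\}$ and run Lyndon--Hochschild--Serre. Instead, following a construction of Bousfield, it writes down an explicit left adjoint $\tilde F$ to the forgetful functor $J$ from continuous right $\End_n$-modules to continuous right $S_n$-modules: $\tilde F(M)=\prod_{i\ge 1}M$, with $g\in S_n$ acting in the $i$th coordinate through $g^{\sigma^{i-1}}$ and the uniformizer $S$ acting by the shift $(x_1,x_2,\dots)\mapsto(0,x_1,x_2,\dots)$; the counit $\tilde F J(N)\to N$ sends $(x_i)$ to $\sum_i x_i S^{i-1}$, which converges by $p$-completeness and $S^n=p$. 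Both functors are exact, so $\ext^s_{\End_n}(\tilde F(-),L)\cong\ext^s_{S_n}(-,JL)$, and for any $\End_n$-module $N$ one has a short exact sequence
\[0\longrightarrow \tilde F\bigl((JN)'\bigr)\xrightarrow{\ \tilde F(S)-S\ }\tilde F(JN)\longrightarrow N\longrightarrow 0,\]
where $(-)'$ denotes the $\sigma$-twist of the $S_n$-action. The resulting long exact sequence sandwiches each $\End_n$-Ext group between two $S_n$-Ext groups in adjacent degrees, and the bound $n^2+1$ drops out.

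Conceptually this two-term resolution \emph{is} the algebraic incarnation of your observation that $\mathrm{cd}(\mathbf{N})=1$, so the two arguments share a skeleton. But the paper's packaging buys something concrete: it never excises the absorbing zero and never invokes a spectral sequence for a profinite-by-discrete monoid extension with twisted coefficients. The gap you flagged is genuine---one cannot simply delete $0\in\End_n$ without first determining how it acts on $N$ and what that does to the bar complex (and the answer is sensitive to the comodule in question), and the Lyndon--Hochschild--Serre step for $S_n\rtimes\mathbf{N}$ in the continuous setting would itself need to be set up and justified. The adjoint-functor route bypasses both problems by producing the long exact sequence from an honest short exact sequence inside the category of $\End_n$-modules, with no decomposition of the monoid required.
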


\subsection{Acknowledgements}

This work came out of an extended discussion with Martin Bendersky about unstable chromatic homotopy theory and the author is very grateful for all the insight he has provided.  In particular he was very helpful with Propositions \ref{reconciliation}, \ref{exactness-of-U-Landweber-exact-case}, and \ref{exactness-of-U-non-Landweber-exact-case}.  The author also wishes to thank Mark Hovey for help in understanding the nature of faithfully flat extensions of Hopf algebroids and equivalences of categories of comodules.    Also,  thanks are due to Hal Sadofsky and Ethan Devinatz for several useful conversations.   The author especially wishes to thank the referee for multiple careful readings of the paper and for pointing out a number of typos and flaws in the exposition, and for identifying several significant gaps which led to some erroneous claims in the preliminary versions of the paper.

%
%

\section{Unstable Comodules}\label{section-unstable-comodules}

We begin by recalling some notions from \cite{BCM}, \cite{BH} and \cite{BT}.  Suppose that $E$ is a spectrum representing a Landweber exact homology theory with coefficient ring concentrated in even degrees. Let 
$\underline{E}_*$ denote the corresponding $\Omega$-spectrum.  There are generators $\beta_i \in E_{2i}(CP^{\infty})$ and under the complex orientation for complex cobordism $CP^{\infty} \to \underline{MU}_2$ these map to classes $E_{2i}(\underline{MU}_2)$.  Localized at a prime $p$, denote the image of $\beta_{p^i}$ by $b_{(i)} \in E_{2p^i}(\underline{E}_2)$.  Let $b_i \in E_{2p^i-2}(E)$ denote the image under stabilization.  Following \cite{BCM}, \cite{BH}, and \cite{BOAR},  when $E= BP$, we replace the elements $b_i$ with $h_i = c(t_i)$ and replace $b_{(i)}$ with $h_{(i)}$, a canonical lift of $h_i$.    For a finite sequence of integers $J=(j_1,j_2,\dots,j_n)$ define the {\em length} of $J$ to be $l(J) = j_1 +j_2 + \cdots j_n$ and define 
\[ h^J = h_1^{j_1}h_{2}^{j_2}\cdots h_n^{j_n}.\]

\begin{definition}\label{original-definition}  Let $(A,\Gamma)$ denote the Hopf algebroid $(E_*,E_*E)$ for a Landweber exact spectrum $E$.  Let $M$ be a free, graded $A-module$.  Define $U_{\Gamma}(M)$ to be sub-$A$-module of $\Gamma\otimes_{A}M$ spanned by all elements of the form $h^J\otimes m$ where $2l(J) <  |m|$.    Secondly, define $V_{\Gamma}(M)$ to be sub-$A$-module of $\Gamma\otimes_{A}M$ spanned by all elements of the form $h^J\otimes m$ where $2l(J) \le  |m|$.
\end{definition}
We will sometimes drop the subscript $\Gamma$ from the notation if it will not cause confusion.

The following theorem was proved in \cite{BCM} for $E=BP$ and in \cite{BH} for an arbitrary Landweber exact theory.   Here $M_s$ denotes a free $A$-module generated by one class $i_s$ in dimension $s$.

\begin{theorem}\label{suspension-isomorphism}
In the Hopf ring for $E$ the suspension homomorphism restricted to the primitives 
\[\sigma_*: PE_*(\underline{E}_s) \to U(M_s)\]
and the suspension homomorphism restricted to the indecomposables
\[\sigma_*: QE_*(\underline{E}_s) \to V(M_s)\]
are isomorphisms.  
\end{theorem}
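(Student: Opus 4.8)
\emph{Overall strategy.} I would prove this in two stages: first for $E=BP$, following \cite{BCM}, and then deduce the general Landweber exact case by a flat base change, following \cite{BH}. Both stages rest on a complete computation of the Hopf ring $E_*(\underline{E}_*)$ --- the Ravenel--Wilson computation in the $BP$ case --- after which the statement becomes a matter of reading off the modules of $*$-primitives and $*$-indecomposables and tracking the effect of suspension.

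\emph{The $BP$ case.} Ravenel and Wilson compute $BP_*(\underline{BP}_*)$ as a free $BP_*$-module, with an explicit description of each $BP_*(\underline{BP}_s)$ as a Hopf algebra under the $*$-product: it is generated by $\circ$-monomials built from the $b_{(i)}$ and the fundamental class $i_s\in BP_s(\underline{BP}_s)$, the latter occurring because $\underline{BP}_s$ is $(s-1)$-connected. From this one reads off $PBP_*(\underline{BP}_s)$ and $QBP_*(\underline{BP}_s)$ as spanned by the classes corresponding to $b^J\otimes i_s$, subject to an unstable excess bound that works out to $2l(J)<|i_s|$ for the primitives and $2l(J)\le|i_s|$ for the indecomposables --- the bound being exactly the range in which such a $\circ$-monomial survives in $\underline{BP}_s$ rather than overshooting its connectivity. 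Comparing with the definitions of $U$ and $V$, this identifies $PBP_*(\underline{BP}_s)$ with $U(M_s)$ and $QBP_*(\underline{BP}_s)$ with $V(M_s)$ as graded $BP_*$-modules, before suspension.

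\emph{Suspension and the general case.} Under the suspension homomorphism the $\circ$-product passes to ordinary multiplication in $\Gamma=E_*E$ and the unstable fundamental class $i_s$ maps to the standard $A$-module generator of $M_s$; the Ravenel--Wilson formulas make this precise, so that $\sigma_*(b^J\circ i_s)=b^J\otimes i_s$ in $\Gamma\otimes_A M_s$. Since these target classes are linearly independent (again part of the Hopf ring computation), $\sigma_*$ is injective on primitives and on indecomposables, and by the previous paragraph its image is exactly $U(M_s)$, respectively $V(M_s)$, which proves the theorem for $E=BP$. For a general Landweber exact $E$ one uses that $E_*$ is flat over $BP_*$ along the relevant structure maps to obtain $E_*(\underline{E}_n)\cong E_*\otimes_{BP_*}BP_*(\underline{BP}_n)$ compatibly with the $\circ$- and $*$-products, the coproduct, and suspension: one builds $\underline{E}_2$ from the complex orientation $CP^{\infty}\to\underline{E}_2$ and assembles the higher spaces, with $\underline{E}_{2n-1}=\Omega\underline{E}_{2n}$ and the bar spectral sequence for $E_*(\underline{E}_{2n-1})$ identified with $E_*\otimes_{BP_*}(-)$ applied to the one for $BP$. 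Every object in sight then base changes --- $PE_*(\underline{E}_s)$, $QE_*(\underline{E}_s)$, $U(M_s)$, $V(M_s)$, and $\sigma_*$ are $E_*\otimes_{BP_*}(-)$ of their $BP$-analogues --- so the isomorphisms persist.

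\emph{Main obstacle.} The two substantial ingredients are exactly the ones I would borrow from the literature: the Ravenel--Wilson Hopf ring computation together with the careful excess/connectivity bookkeeping that matches the geometric survival condition with the numerical condition $2l(J)<|m|$ (resp.\ $\le$), which is \cite{BCM}; and the Landweber-exact base change for the Hopf ring --- establishing $E_*(\underline{E}_n)\cong E_*\otimes_{BP_*}BP_*(\underline{BP}_n)$ with all of its algebraic structure --- which is \cite{BH}. Granting these, the identification of $P$ and $Q$ and the computation of $\sigma_*$ are essentially formal.
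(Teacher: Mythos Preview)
The paper does not give its own proof of this theorem; it is stated as a result imported from the literature, with the $BP$ case attributed to \cite{BCM} and the general Landweber exact case to \cite{BH}. Your proposal is a reasonable high-level sketch of how those references proceed --- the Ravenel--Wilson Hopf ring computation to identify $P$ and $Q$ for $BP$, followed by flat base change for general $E$ --- so you are in agreement with the paper's treatment, which is simply to cite those sources.
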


The functors $U_{\Gamma}(-)$ and $V_{\Gamma}(-)$  extend to the category of arbitrary $A$-modules. 
\begin{definition}\label{extend-U}
 Let $M$ be an $A$-module and   
let \[F_1 \to F_0  \to M \to 0\]
be exact with $F_1$ and $F_0$ free over $A$.  Define $U_{\Gamma}(M)$ by 
\[U_{\Gamma}(M) = \coker(U_{\Gamma}(F_1) \to U_{\Gamma}(F_0))\]
and $V_{\Gamma}(M)$ by 
\[V_{\Gamma}(M) = \coker(V_{\Gamma}(F_1) \to V_{\Gamma}(F_0)).\]
\end{definition}

It is shown in \cite{BCM}, \cite{BH} that $U$ and $V$ are each the functor of a comonad $(U,\Delta,\epsilon)$ and $(V,\Delta,\epsilon)$ on the category of $A$-modules.  For now we will focus on the functor $U$ but in everything that follows in this section and the next there are analogous results for $V$.  Keep in mind that if $M$ is concentrated in odd dimensions, then $U(M)$ and $V(M)$ are the same.

Using some work from Dustin Mulcahey's thesis \cite{MUL} we can extend the above to a more general situation.  But first we will reconcile two differently defined but apparently similar notions of $U(M)$.    We still suppose $E$ is a spectrum representing a Landweber exact homology theory and let $M$ denote a free $E_*$-module, and let $F$ be any $p$-local homology theory which is torsion free with coefficients concentrated in even dimensions.   In Definition 2.9 of \cite{BH}, Bendersky and Hunton define $U_F(M)$ to be the sub-$F_*$-module of $F_*(E)\otimes_{E_*}M$ spanned by elements $h^{I}\otimes m$ where $2l(I) < |m|$.    If we let $F= BP$ this gives a $BP_*$-module $U_{BP}(M)$.  However, regarding $M$ as a $BP_*$-module, which will not typically be a free $BP_{*}$-module,  we also have the $BP_*$-module $U_{\Gamma}(M)$, where $BP_*BP = \Gamma$,  defined in \ref{extend-U} above.  Note that $U_{BP}(M)$ is a $BP_*$-submodule of $BP_*(E)\otimes_{E_*}M = BP_*BP\otimes_{BP_*}E_*\otimes_{E_*}M = BP_*BP\otimes_{BP_*}M$ by definition, whereas $U_{\Gamma}(M)$ maps to $BP_*BP\otimes_{BP_*}M$, but not {\em a priori} injectively.  

\begin{proposition}\label{reconciliation}  Denote $(BP_*,BP_*BP)$ by $(A,\Gamma)$ and $(E_*,E_*E)$ by $(B,\Sigma)$ for a Landweber exact homology theory $E$.  Let $M$ denote a free $E_*$-module. Then the map 
$U_{\Gamma}(M) \to \Gamma \otimes_{A}M$ which comes from Definitions \ref{original-definition} and \ref{extend-U} is an injection and $U_{BP}(M) \cong U_{\Gamma}(M)$.    Furthermore $B\otimes_{A} U_{\Gamma}(M) \cong   
U_{\Sigma}(M)$.  A similar result holds for $V$. 
\end{proposition}

\begin{proof}  Because $E$ is Landweber exact, it is torsion free, and it follows by a simple argument that $U_{\Gamma}(M)$ is also torsion free.  So to establish injectivity it suffices to tensor with the rationals: \break $Q\otimes U_{\Gamma}(M) \to Q\otimes \Gamma \otimes_{A}M$.  This map is in fact an isomorphism because rationally the unstable condition is vacuous. The isomorphism 
$U_{BP}(M) \cong U_{\Gamma}(M)$ follows by comparing the two images in $BP_*BP\otimes_{BP_*}M$.  The last statement follows immediately from Corollary 2.12 of \cite{BH}.
\end{proof}
It follows that $B\otimes_{A} U_{\Gamma}(M) \cong   U_{\Sigma}(M)$ for an arbitrary $B$-module, and similarly for $V$.

\begin{definition}\label{unstable-comodule-categories-Landweber-exact}
 Let $\mathcal{U}_{\Sigma}$ denote the category of coalgebras over the comonad $U_{\Sigma}$ and similarly let $\mathcal{V}_{\Sigma}$ denote the category of coalgebras over the comonad $V_{\Sigma}$.  We call an object in $\mathcal{U}_{\Sigma}$ (or in $\mathcal{V}_{\Sigma}$, depending on the context) an unstable $\Sigma$-comodule. 
\end{definition}

A Hopf algebroid $(B,\Sigma)$ is called {\em flat} if $\Sigma$ is flat as a left (and hence right) $B$-module.  Flatness ensures that the category of 
$\Sigma$-comodules is an abelian category.   We also want the category of unstable comodules to be abelian. This will follow from the exactness of the functor $U_{\Sigma}$  on the category of $B$-modules.   The exactness of $U$ for $(BP_*,BP_*BP)$ has been asserted without proof in multiple places in the literature.   A proof is given in \cite{BCR} but that proof only applies to the case of free modules. 

The following proof is based on an idea of Martin Bendersky's and we are grateful to him for allowing it to be included here. In particular Bendersky suggested using the Boardman basis for the Hopf ring for $BP$ which is the most convenient for this purpose. 

\begin{proposition}\label{exactness-of-U-Landweber-exact-case}
Let $(A, \Gamma) = (BP_*,BP_*BP)$ and suppose $(B,\Sigma)$ is a Hopf algebroid associated to a Landweber exact homology theory.  Then the functors $U_{\Sigma}$ and $V_{\Sigma}$ are exact on $B$-modules. 
\end{proposition}

\begin{proof} 

We first give the proof for $BP$.  We need to use the fact that the indecomposables and the primitives in the Hopf ring for $BP$ are free $BP_*$-modules.   In \cite{RW1} Ravenel and Wilson compute $BP_*(\underline{BP}_*)$, the Hopf ring for $BP$, and show that the indecomposables and primitives are free {\em left} $BP_*$-modules.   They write a down a basis, which in turn gives a basis as a free $Z_{(p)}$-module.   In spite of the fact that there is no conjugation in the Hopf ring corresponding to the conjugation $c$ in $BP_*BP$, Boardman proves in \cite{BOAR} that if one considers the right action of $BP_*$ instead, the indecomposables and primitives are free {\em right} $BP_*$-modules as well. 

To start we restrict to the even spaces in the Hopf ring and the functor $V$.   The proof can then be extended to the odd spaces and the functor $U$ by standard arguments. By \cite{RW1} the indecomposables 
$QBP_*(\underline{BP}_*)$ are generated as a left $BP_*$-module by monomials $h^{\circ J'}\circ[v^K]$, where $J' = (j_0,j_1,\dots)$ and $K =(k_1,k_2,\dots)$ are sequences of non-negative integers.  If also $I=(i_1,i_2,\dots)$, the bidegree of a generator is given by 
\[ v^Ih^{J'}[v^K] \in QBP_{|v^I| + |h^{J'}|+2l(J')}(\underline{BP}_{2l(J')-|v^K|})\] 
where $v^Ih^{J'}[v^K]$ stands for $v^{I}\circ h_0^{\circ j_0}\circ h_1^{\circ j_1} \circ \dots \circ [v^K]$. 
The isomorphism in \ref{suspension-isomorphism} is given by 
\[ v^Ih^{J'}[v^K]  \xrightarrow{}  v^Ih^{J}\otimes v^K\iota_{2m}\]
where $J = (j_1,j_2,\dots)$ is obtained from $J'$ by dropping $j_0$ and $2m = 2j_0 + 2l(J) - |v^K|$.   

The Ravenel-Wilson basis involves a condition on $J'$ and $K$.   
For the Boardman basis consider monomials $v^Ih^{J'}$ and call such a monomial {\em Boardman allowable} if it is not divisible by any monomial of the form 
\[v_{d_0}v_{d_1}^{p}v_{d_2}^{p^2}\dots v_{d_l}^{p^l}h_{l}\]
where $l \ge 0$ and $d_0 \le d_1 \le d_2 \le \dots \le d_l$.   
Then the main theorem of \cite{BOAR} is that the Boardman allowable monomials are a basis for the indecomposables as a right $BP_*$-module.   (Note that while $QBP_*(\underline{BP_{2m}})$ is a left $BP_*$-submodule it is not a right $BP_*$-submodule because multiplying by an element $v\in BP_*$ on the right changes the index of the space in the $\Omega$-spectrum. So we have to consider the entire Hopf ring when we consider the right module structure.) 

If we let $\mathcal{B}$ denote the free $Z_{(p)}$-module generated by the Boardman allowable monomials then Boardman's theorem implies that
\[QBP_*(\underline{BP}_*) \cong \mathcal{B} \otimes_{Z_{(p)}} A\]
as free $Z_{(p)}$-modules.   (The bigrading on the Hopf ring is not the tensor product of gradings on the two factors.) 

Now suppose $F$ is free graded $BP_*$-module of rank one on a generator in dimension $2m$.  Then by \ref{suspension-isomorphism} we identify $V(F)$ with the subgroup of $\mathcal{B} \otimes_{Z_{(p)}} A$ spanned by the monomials with second bidegree equal to $2m$.  If $F$ is a free graded $BP_*$-module of arbitrary rank we identify $V(F)$ with a subgroup of a sum of copies of $\mathcal{B} \otimes_{Z_{(p)}} A $ by identifying each summand of $V(F)$ with a subgroup of a copy of $\mathcal{B} \otimes_{Z_{(p)}} A $.  If $f:F_1\to F_0$ is a map of free $BP_*$-modules then this defines an evident map $\mathcal{B}\otimes f$ from a sum of copies of $\mathcal{B} \otimes_{Z_{(p)}} A$ to another sum of copies of $\mathcal{B} \otimes_{Z_{(p)}} A$, which restricts to $V(f)$.   It follows that for any $BP_*$-module $M$, with a free presentation
\[F_1 \to F_0 \to M \to 0\] 
the $Z_{(p)}$-module $V(M)$ is isomorphic to the subgroup of $\mathcal{B}\otimes_{Z_{(p)}} M$ of elements in the appropriate bidegree.   It is immediate from this that $V$ is an exact functor.

For the arbitrary Landweber exact case suppose  we have short exact sequence of $B$-modules
\[ 0 \to M' \to M \to M'' \to 0.\] Thinking of this as a SES of $A$-modules we have a SES 
\[ 0 \to U_{\Gamma}(M') \to U_{\Gamma}(M) \to U_{\Gamma}(M'') \to 0\] 
since $U_{\Gamma}$ is exact.     These are unstable $\Gamma$-comodules, hence stable $\Gamma$-comodules. Since $B$ is Landweber exact tensoring with $B$ preserves exactness on the category of $\Gamma$-comodules, and the result follows from the sentence that follows Proposition \ref{reconciliation}.

\end{proof}

Now, following Mulcahey (\cite{MUL}) we can generalize the definition of unstable comodules to certain non-Landweber exact homology theories.  For the time being $(A, \Gamma)$ still denotes $(BP_*,BP_*BP)$.
Suppose $A\xrightarrow{f} B$ is a map of graded algebras.  If we define 
\[{\Sigma = B \otimes_{A} \Gamma \otimes_{A} B}\] then $(B,\Sigma)$ becomes a Hopf algebroid and we have a map of Hopf algebroids $(A,\Gamma) \to (B, \Sigma)$.    The example that was treated in \cite{MUL} was $A=BP_*$ and $B = K(n)_*$ but the following makes sense more generally.

\begin{definition}\label{mulcahey-definition}  The endofunctor $U_{\Sigma}$ on $B$-mod, the category of $B$-modules, is defined  by 
\[U_{\Sigma}(N) = B \otimes_A U_{\Gamma}(N).\]

Define a comultiplication by 
\[
\begin{diagram}
\node{   U_{\Sigma}(N) = B\otimes_A U_{\Gamma}(N)   }    \arrow{e,t}{  B\otimes \Delta^{\Gamma}   }  \arrow{se,t}{\Delta^{\Sigma}}  \node{   B\otimes_A U^{2}_{\Gamma}(N)   }
\arrow{s,r}{B\otimes U_{\Gamma}(f\otimes U_{\Gamma}(N))}    \\
\node{}    \node{   B\otimes_A U_{\Gamma}(B\otimes_A U_{\Gamma}(N))    }\\
\end{diagram}
\]
and a counit
\[
U_{\Sigma}(N) = B\otimes_A U_{\Gamma}(N)  \xrightarrow{B\otimes \epsilon^{\Gamma}} B\otimes_A N \xrightarrow{} N
\]
Make an analogous definition for $V_{\Sigma}$. 
\end{definition}

\begin{proposition}[See \cite{MUL}]  The functors $U_{\Sigma}$ and $V_{\Sigma}$  are both comonads on the category of $B$-modules. \end{proposition}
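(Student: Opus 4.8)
The plan is to recognize $U_{\Sigma}$, and likewise $V_{\Sigma}$, as the comonad $U_{\Gamma}$ (resp.\ $V_{\Gamma}$) conjugated by the extension/restriction-of-scalars adjunction along $f\colon A\to B$, and then to appeal to the general fact that conjugating a comonad by an adjunction produces a comonad. Write $F = B\otimes_A(-)\colon \mathcal{M}_A\to\mathcal{M}$ for extension of scalars and $G\colon\mathcal{M}\to\mathcal{M}_A$ for restriction along $f$, so that $F\dashv G$, with unit $\eta_P\colon P\to GF(P)=B\otimes_A P$ given by $p\mapsto 1\otimes p$ and counit $\varepsilon_N\colon FG(N)=B\otimes_A N\to N$ given by the $B$-action. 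By construction $U_{\Sigma}=F\,U_{\Gamma}\,G$ and $V_{\Sigma}=F\,V_{\Gamma}\,G$.

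First I would check that the structure maps written down in the definition are exactly the canonical ones attached to this data. The map $f\otimes U_{\Gamma}(N)$ appearing in the comultiplication is precisely the unit component $\eta_{U_{\Gamma}(N)}\colon U_{\Gamma}(N)\to GF\,U_{\Gamma}(N)=B\otimes_A U_{\Gamma}(N)$, so that $U_{\Gamma}(f\otimes U_{\Gamma}(N))=U_{\Gamma}(\eta_{U_{\Gamma}(N)})$ and hence, abstractly, $\Delta^{\Sigma}=\bigl(F\,T(\eta\ast T)\,G\bigr)\circ\bigl(F\,\Delta^{T}\,G\bigr)$ with $T=U_{\Gamma}$, where $\eta\ast T\colon T\to GFT$ is the whiskered unit; meanwhile the last arrow $B\otimes_A N\to N$ in the counit is $\varepsilon_N$, so that $\epsilon^{\Sigma}=\varepsilon\circ\bigl(F\,\epsilon^{T}\,G\bigr)$. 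The same identifications hold verbatim for $V_{\Sigma}$. This step is where the concrete construction of $U_{\Gamma}$ and $V_{\Gamma}$ enters: one must unwind that these functors are applied to $N$, to $U_{\Gamma}(N)$, and to $B\otimes_A U_{\Gamma}(N)$ through their underlying $A$-module structures, and that $U_{\Gamma}$ applied to the $A$-module map $\eta_{U_{\Gamma}(N)}$ is unambiguous because $U_{\Gamma}$ and $V_{\Gamma}$ are defined by a right-exact procedure (cokernel of a free presentation) that $F$ also preserves.

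It then suffices to prove the following formal statement: if $F\dashv G$ is an adjunction with unit $\eta$ and counit $\varepsilon$, and $(T,\Delta^{T},\epsilon^{T})$ is a comonad on the domain of $F$, then $S:=FTG$ is a comonad with counit $\epsilon^{S}=\varepsilon\circ(F\epsilon^{T}G)$ and comultiplication $\Delta^{S}=\bigl(F\,T(\eta\ast T)\,G\bigr)\circ\bigl(F\,\Delta^{T}\,G\bigr)$. Applying this with $T=U_{\Gamma}$ and with $T=V_{\Gamma}$, which are comonads on $\mathcal{M}_A$ by the theorem of \cite{BCM}, \cite{BH} recalled above, gives the proposition. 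The formal statement is verified by a diagram chase: the counit axioms $(\epsilon^{S}S)\circ\Delta^{S}=\mathrm{id}_{S}=(S\epsilon^{S})\circ\Delta^{S}$ reduce, via naturality of $\epsilon^{T}$ and the triangle identities $(G\varepsilon)\circ(\eta G)=\mathrm{id}_{G}$ and $(\varepsilon F)\circ(F\eta)=\mathrm{id}_{F}$, to the counit axioms for $T$; and coassociativity $(\Delta^{S}S)\circ\Delta^{S}=(S\Delta^{S})\circ\Delta^{S}$ reduces, via naturality of $\eta$ and functoriality of $F$, $G$, $T$, to coassociativity of $\Delta^{T}$.

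The main obstacle is not conceptual but bookkeeping: correctly parsing the iterated expression $U_{\Gamma}(f\otimes U_{\Gamma}(N))$ inside $B\otimes_A(-)$ and matching it with the whiskered transformation $F\,T(\eta\ast T)\,G$, and then tracking the whiskerings carefully through the coassociativity chase. Once the dictionary of the second paragraph is in place, nothing further about the Hopf algebroid $(B,\Sigma)$ is needed, and the argument is purely categorical.
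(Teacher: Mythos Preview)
Your proposal is correct and is essentially an organized version of what the paper does; the paper's own proof consists entirely of the sentence ``The proof is a straightforward diagram chase.'' Your framing---recognizing $U_{\Sigma}=F\,U_{\Gamma}\,G$ with $F\dashv G$ the extension/restriction adjunction and then invoking the general fact that a comonad transported along an adjunction is again a comonad---is a clean way to package that chase, and your identification of $\Delta^{\Sigma}$ and $\epsilon^{\Sigma}$ with the canonical transferred structure maps is on the nose.
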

\begin{proof} The proof is a straightforward diagram chase.\end{proof}

By Proposition \ref{reconciliation} this generalizes the definition of $U$ and $V$ in the Landweber exact case.

\begin{definition}\label{unstable-comodule-categories}
Still denoting $(A, \Gamma) = (BP_*,BP_*BP)$, let $\mathcal{H}$ denote the category whose objects are Hopf algebroids $(B,\Sigma)$ 
arising from a map of commutative  graded algebras $A\to B$, with $\Sigma = B\otimes_{A}\Gamma \otimes_{A}B$ as above, and
satisfying
\begin{enumerate}
\item $\Sigma$ is flat as a $B$-module
\item $U_{\Sigma}$ is an exact functor
\end{enumerate} 
The morphisms in $\mathcal{H}$ are Hopf algebroid maps $(B,\Sigma) \xrightarrow{j} (C,\Phi)$ under $(A,\Gamma)$, i.e. a commutative diagram of Hopf algebroids.
\[
\begin{tikzcd}
& (B,\Sigma) \arrow[dd,"j"] \\
(A,\Gamma)    \arrow[ur]  \arrow[dr] & \\
 & (C,\Phi) \\
\end{tikzcd}
\]
Define $\mathcal{U}_{\Sigma}$ and $\mathcal{V}_{\Sigma}$ just as in Definition \ref{unstable-comodule-categories-Landweber-exact}. 
 \end{definition}

Thus an unstable $\Sigma$-comodule has a lifting:

\[
\begin{diagram}
\node{M} \arrow{e,t}{} \arrow{se,b}{\psi_{M}} \node{\Sigma\otimes_{B}M} \\
\node{} \node{U_{\Sigma}(M)} \arrow{n}\\
\end{diagram}
\]

Now we will let $(A,\Gamma)$ denote an arbitrary object in $\mathcal{H}$.  
The functor $U_{\Gamma}$ restricted to $\mathcal{U}_{\Gamma}$ is the functor of a monad $(U_{\Gamma},\mu,\eta)$, using the definitions $\mu = U_{\Gamma}\epsilon$ and $\eta=\psi$.  The Ext groups in $\mathcal{U}_{\Gamma}$ are defined and computed as follows (see \cite{BCM}, \cite{BH} and \cite{BT}). 

\begin{definition}   Suppose $M$ is an unstable comodule.
Analogous to the stable case, the monad $(U,\mu,\eta)$ gives maps
\begin{align*}
 & U^{i}\eta^{U}U^{n-i}: U^{n}(M)\to U^{n+1}(M),\,\, 0\le i \le n,\\
 & U^{i}\mu^{U}U^{n-i}: U^{n+2}(M)\to U^{n+1}(M),\,\, 0\le i \le n,
\end{align*}
which define a cosimplicial object in $\mathcal{U}$ called the cobar resolution.   For each $t \ge 0$ let  $A[t]$ denote a free $A$-module of rank one on a generator with dimension $t$.  Apply the functor 
$\hom_{\mathcal{U}}(A[t],\,\,)$ to get a cosimplicial abelian group and hence a
chain complex called the cobar complex
\[\label{cobar-complex}
\hom_{\mathcal{U}}(A[t],U(M)) \xrightarrow{\partial} \hom_{\mathcal{U}}(A[t],U^{2}(M)) \xrightarrow{\partial} 
\hom_{\mathcal{U}}(A[t],U^3(M))\xrightarrow{\partial} \cdots\\
\]
with
\[\partial = \sum_{i=0}^{n} (-1)^i d^{i}: \hom_{\mathcal{U}}(A[t],U^{n}(M)) \to 
\hom_{\mathcal{U}}(A[t],U^{n+1}(M)).\]
Here $d^{i} = \hom_{\mathcal{U}}(A[t],U^{i}\eta^{U}U^{n-i})$.  By the adjunction 
\[\hom_{\mathcal{U}}(A[t],U(N)) = \hom_{A-\text{mod}}(A[t],N) = N_t\]
the cobar complex becomes
\[ M_t \xrightarrow{\partial} U(M)_t \xrightarrow{\partial} U^2(M)_t \xrightarrow{\partial} \dots   \]
The homology of this chain complex gives $\ext^{s,t}_{\mathcal{U}}(A,M)$. 
\end{definition}

In \cite{MR1} Miller and Ravenel consider a morphism of Hopf algebroids  $(A,\Gamma) \to (B,\Sigma)$ and define a pair of adjoint functors on the comodule categories

\begin{center}
\begin{tikzpicture}
\node (a) at (0,0) {$  \Gamma\text{-comod} $};
\node (b) at (3,0) {$  \Sigma\text{-comod}  $};
\path[->,font=\scriptsize,>=angle 90]
([yshift= 3pt]a.east) edge node[above] {$\pi_*$} ([yshift= 3pt]b.west);
\path[<-,font=\scriptsize,>=angle 90]
([yshift= -3pt]a.east) edge node[below] {$\pi^*$} ([yshift= -3pt]b.west);
\end{tikzpicture}
\end{center}

\noindent defined by $\pi_*(M) = B \otimes_A M$ and $\pi^*(N) = (\Gamma \otimes_A B)\cotensor_{\Sigma} N$ for a $\Gamma$-comodule $M$ and a $\Sigma$-comodule $N$.
This adjunction is discussed in detail in several places, for example \cite{HOV2} and \cite{MUL}.  The functors $\pi_*$ and $\pi^*$ often define inverse equivalences of comodule categories.
For example if $\Sigma = B\otimes_{A} \Gamma \otimes_{A} B$ and $A\to B$ is a faithfully flat extension of rings, then it is not difficult to see that this is the case.

Now suppose that $(A,\Gamma)\to (B,\Sigma)$ is a morphism in $\mathcal{H}$.  Following Mulcahey's work in \cite{MUL} we define unstable analogs of $\pi_*$ and $\pi^*$.  

\begin{definition}\label{unstable-adjoint-functors}
Define functors 
\begin{center}
\begin{tikzpicture}
\node (a) at (0,0) {$ \mathcal{U}_{ \Gamma} $};
\node (b) at (3,0) {$ \mathcal{U}_{ \Sigma}  $};
\path[->,font=\scriptsize,>=angle 90]
([yshift= 3pt]a.east) edge node[above] {$\alpha_*$} ([yshift= 3pt]b.west);
\path[<-,font=\scriptsize,>=angle 90]
([yshift= -3pt]a.east) edge node[below] {$\alpha^*$} ([yshift= -3pt]b.west);
\end{tikzpicture}
\end{center}
\noindent by $\alpha_*(M) = B \otimes_A M$ for an unstable $\Gamma$-comodule $M$, and for an unstable $\Sigma$-comodule $N$, define $\alpha^*(N)$ to be the equalizer

\begin{center}
\begin{tikzpicture}
\node (a) at (0,0) {$\alpha^*(N)$};
\node (b) at (2,0) {$ U_{\Gamma}(N)  $};
\node (c) at (5,0) {$ U_{\Gamma}U_{\Sigma }(N)$};
\path[->,font=\scriptsize,>=angle 90]
(a) edge (b)
([yshift= 3pt]b.east) edge node[above] {$ U_{\Gamma}(\psi_{N}) $} ([yshift= 3pt]c.west)
([yshift= -3pt]b.east) edge node[below] {$U_{\Gamma}(\beta)\circ \Delta_{\Gamma}  $} ([yshift= -3pt]c.west);
\end{tikzpicture}
\end{center}
\end{definition}
\noindent where $\beta:U_{\Gamma}(N)\to U_{\Sigma}(N).$

\begin{proposition}[See \cite{MUL}]  The functors $\alpha_*$ and $\alpha^{*}$  form an adjoint pair. \end{proposition}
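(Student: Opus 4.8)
The plan is to establish the natural bijection $\Hom_{\mathcal{U}_\Sigma}(\alpha_*(M), N) \cong \Hom_{\mathcal{U}_\Gamma}(M, \alpha^*(N))$ directly, imitating Miller and Ravenel's argument for the stable adjunction $\pi_* \dashv \pi^*$ with $U_\Gamma(-)$ playing the role of $\Gamma\otimes_A(-)$ and the equalizer of Definition \ref{unstable-adjoint-functors} playing the role of the cotensor product. The point that makes the stable template applicable is the formula $U_\Sigma(-) = B\otimes_A U_\Gamma(-)$ built into the construction of $U_\Sigma$: it says that $\alpha_*$ is nothing but extension of scalars $(B\otimes_A -)\colon A\text{-mod}\to B\text{-mod}$ lifted to the coalgebra categories of the two comonads, so the whole argument reduces to transporting the ordinary extension/restriction-of-scalars adjunction through the forgetful/cofree adjunctions of $U_\Gamma$ and $U_\Sigma$ while keeping track of coalgebra structure maps.

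I would carry this out in four steps. (1) Check that $\alpha_*$ and $\alpha^*$ land in $\mathcal{U}_\Sigma$ and $\mathcal{U}_\Gamma$: for $\alpha_*$ the relevant $\Sigma$-comodule structure on $B\otimes_A M$ is the evident one induced from $\psi_M$ through $U_\Sigma(-) = B\otimes_A U_\Gamma(-)$, and coassociativity and counit are the diagram chase already used to see $U_\Sigma$ is a comonad; for $\alpha^*$ one observes that both maps $U_\Gamma(N)\rightrightarrows U_\Gamma U_\Sigma(N)$ in the equalizer diagram are $\Gamma$-comodule maps between cofree comodules, so, since $\mathcal{U}_\Gamma$ is abelian and the forgetful functor to $A$-modules is exact, the equalizer inherits a canonical unstable $\Gamma$-comodule structure. (2) Recall that $(U_\Gamma(N),\Delta_\Gamma)$ is the cofree unstable $\Gamma$-comodule on the underlying $A$-module of $N$, so $\Hom_{\mathcal{U}_\Gamma}(M, U_\Gamma(N))\cong\Hom_A(M,N)$ naturally via $\phi\mapsto \tilde\phi := U_\Gamma(\phi)\circ\psi_M$; combining this with the universal property of the equalizer identifies $\Hom_{\mathcal{U}_\Gamma}(M,\alpha^*(N))$ with the set of $A$-module maps $\phi\colon M\to N$ satisfying $U_\Gamma(\psi_N)\circ\tilde\phi = U_\Gamma(\beta)\circ\Delta_\Gamma\circ\tilde\phi$. (3) On the other side, extension of scalars identifies $\Hom_{\mathcal{U}_\Sigma}(\alpha_*(M), N)$ with the set of $A$-module maps $\phi\colon M\to N$ whose $B$-linear extension $B\otimes_A M\to N$ is a $\Sigma$-comodule map; unwinding $\psi_{B\otimes_A M}$ and $U_\Sigma(-) = B\otimes_A U_\Gamma(-)$, with $\beta\colon x\mapsto 1\otimes x$, this condition becomes the single equation $\psi_N\circ\phi = \beta\circ\tilde\phi$ in $U_\Sigma(N) = B\otimes_A U_\Gamma(N)$. (4) Show that the conditions from (2) and (3) are equivalent, and check that all the bijections used are natural in $M$ and $N$.

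I expect Step (4) to be the crux. It is a short diagram chase in both directions: from $\psi_N\circ\phi = \beta\circ\tilde\phi$ one applies $U_\Gamma$ and precomposes with $\psi_M$, using that $\tilde\phi$ is a comodule map so that $\Delta_\Gamma\circ\tilde\phi = U_\Gamma(\tilde\phi)\circ\psi_M$; conversely one postcomposes the equalizer equation with the counit $\epsilon$ of the comonad $U_\Gamma$ at $U_\Sigma(N)$ and uses the comonad axioms ($\epsilon_{U_\Gamma(N)}\circ\Delta_\Gamma = \mathrm{id}$ and $\epsilon_M\circ\psi_M = \mathrm{id}$) together with naturality of $\epsilon$. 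The delicate bookkeeping is precisely in matching ``the $B$-linear extension is a map of $\Sigma$-comodules'' with ``the cofree extension factors through the equalizer $\alpha^*(N)$'', where one must exploit the explicit formula $U_\Sigma = B\otimes_A U_\Gamma(-)$ rather than argue with general $\Sigma$-comodules. A secondary point needing attention is the well-definedness of $\alpha^*$ on non-free $N$: since the unstable functors were originally defined on free modules and extended by cokernels, one should confirm that forming the equalizer is compatible with that extension, or equivalently that $U_\Gamma$ preserves the kernels in play — which in the Landweber-exact setting comes down to flatness of $\Gamma$ over $A$, a property inherited by $\Sigma$. Once (1)--(4) are in hand, if one prefers to exhibit the unit and counit rather than the hom-set bijection, the triangle identities are then formal.
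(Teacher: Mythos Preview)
Your outline is sound and would produce a correct proof. The paper, however, takes a more abstract shortcut: it observes that the natural map
\[
B\otimes_A U_{\Gamma}(M)\;\longrightarrow\; B\otimes_A U_{\Gamma}(B\otimes_A M)=U_{\Sigma}(B\otimes_A M)
\]
is a morphism of comonads over the extension-of-scalars functor $B\otimes_A(-)$, and then invokes the general categorical fact that such a comonad morphism lifts the underlying adjunction (extension/restriction of scalars) to an adjunction between the Eilenberg--Moore categories of coalgebras; the details are deferred to Mulcahey's thesis. Your four steps are precisely an explicit unwinding of that abstract principle in this particular case: Step~(1) verifies that $\alpha_*$ and $\alpha^*$ are the lifted functors, and Steps~(2)--(4) reprove the hom-set bijection by hand using the cofree property of $U_\Gamma$. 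What your route buys is self-containment and a concrete description of the unit and counit; what the paper's route buys is brevity and the recognition that nothing specific to $BP$ or Landweber exactness is used beyond the comonad formalism. One small remark: your worry about well-definedness of $\alpha^*(N)$ for non-free $N$ is a red herring---once $U_\Gamma$ is defined on all $A$-modules (by the cokernel extension), the equalizer diagram makes sense and the equalizer in $\mathcal{U}_\Gamma$ exists and is created by the forgetful functor to $A$-modules, since coalgebra categories over a comonad always have limits created this way; flatness of $\Gamma$ over $A$ is not needed for the bare adjunction.
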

\begin{proof} This follows by considering the map 
\[ B\otimes_A U_{\Gamma}(M) \xrightarrow{} B\otimes_A U_{\Gamma}(B\otimes_A M)\]
which is natural in the $A$-module $M$ and gives a morphism of comonads $U_{\Gamma}\to U_{\Sigma}$ which leads to the adjoint pair on comodule categories. See \cite{MUL} for the details.\end{proof}

As an example we describe some torsion unstable $BP_*BP$-comodules.  These will not be used in this paper, but are included to illustrate the unstable condition of Definition \ref{original-definition}. Stably, for every $n$,  $BP_*/I_{n}$ is a $BP_*BP$-comodule and $v_n$ is a comodule map mod $I_n$.  This is because $I_n$ is an invariant ideal.  Unstably there is a subtlety because the terms in $\eta_{R}(v_k) - v_k$ may not lie in $U(BP_*(S^m)/I_{n})$ if  the dimension of the sphere is too small.  For example, $\eta_{R}(v_1) = v_1 - ph_1$, however $\eta_R(v_1) - v_1$ isn't divisible by $p$ in $U(BP_*(S^1))$ since $h_1$ doesn't live on the circle.  You need to be on the 3-sphere or higher for $v_1$ to be an unstable comodule map, which makes  $BP_*(S^m)/I_2$ into an an unstable comodule.

\begin{proposition}\label{benderskys-observation} Given $n$ and $p$, $BP_*(S^m)/I_{n}$ is an unstable comodule and 
\[BP_*(S^m)/I_{n}   \xrightarrow{v_n} BP_*(S^m)/I_{n}\]
is an unstable comodule map, as long as $m \ge 2(\dfrac{p^n-1}{p-1}) +1$.
\end{proposition}

\begin{proof} The statement is true for $n=1$ by the example above.  Let $n\ge 1$ and assume $m$ is as stated. Inductively $BP_*(S^m)/I_{n}$
is an unstable comodule because it is the cokernel of multiplication by $v_{n-1}$ on $BP_*(S^m)/I_{n-1}$. Consider $\eta_{R}(v_n) - v_n$ which is a polynomial in the $h's$ and $\eta_{R}(v)'s$.  The largest length monomial in the $h's$ which could occur is $h_{1}^{\alpha}$ with $|h_{1}^{\alpha}| = |v_n|$, i.e. $2(p-1)\alpha = 2(p^n-1)$.  Therefore with $m \ge 2(\dfrac{p^n-1}{p-1}) +1$ we have that $\eta_{R}(v_n) - v_{n} =0$ in $U(BP_*(S^m)/I_{n})$.
\end{proof}

This result is not sharp. A stronger statement is possible but we will not pursue that here.

%
%

\section{Faithfully Flat Extensions}

The following theorem is an unstable version of a theorem due to Mike Hopkins, Mark Hovey, and Hal Sadofsky.  See   \cite{HOP}, \cite{HOV}, and \cite{HOVSA}. Hovey's paper \cite{HOV} has a detailed proof of the stable theorem in the form that we need, which is stated below as Theorem \ref{hovey's-theorem}.  The proof in \cite{HOV} is based on a study of the category of quasi-coherent sheaves on a groupoid scheme.  That theory has not yet been developed in an unstable setting but we don't need that for the present work. The author is very grateful to Mark Hovey for a detailed discussion of various aspects of Theorem \ref{hovey's-theorem} below. 

\begin{theorem}\label{unstable-faithflat-equiv}  
Suppose  $(A,\Gamma)\to (B,\Sigma)$ is a map of  Hopf algebroids in $\mathcal{H}$.  Assume there exists an algebra $C$ along with an algebra map $B \otimes_A \Gamma \xrightarrow{g} C$ such that the composite
\[A \xrightarrow{1 \otimes \eta_R} B \otimes_A \Gamma \xrightarrow{g} C\]
is a faithfully flat extension of $A$-modules.  To be explicit the first map is the one that takes $a$ to $1\otimes \eta_R(a)$.  
Then $\alpha_*$ and $\alpha^*$ of \ref{unstable-adjoint-functors} are adjoint inverse equivalences of categories.
\end{theorem}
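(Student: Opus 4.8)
The plan is to prove that both the unit $\eta_M\colon M\to\alpha^*\alpha_*M$ and the counit $\varepsilon_N\colon\alpha_*\alpha^*N\to N$ of the adjunction \ref{unstable-adjoint-functors} are natural isomorphisms. I would deduce this from two properties of $\alpha_*$ that must be established first: that $\alpha_*$ is \emph{exact} and that it is \emph{conservative}. On underlying modules $\alpha_*$ is just $B\otimes_A(-)$, and since $A\to B$ itself need not be flat this is precisely where the algebra $C$ is used. Following the proof that $\pi_*$ is exact in \cite{HOV}, one extends scalars along the faithfully flat map $A\to B\otimes_A\Gamma\xrightarrow{g}C$ and, using the standard isomorphism $\Gamma\otimes_{A,\eta_R}(-)\cong\Gamma\otimes_{A,\eta_L}(-)$ valid on comodules, one checks exactness and faithfulness after base change to $C$ and then descends. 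The unstable feature is that this must be carried out with the comonad $U_\Sigma(N)=B\otimes_A U_\Gamma(N)$ and with the equalizer defining $\alpha^*$, in place of the bare tensor and cotensor products; that is the step I expect to be the main obstacle.

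Given that $\alpha_*$ is exact and conservative, the remainder is formal. The key computation is that $\alpha^*$ carries a cofree unstable $\Sigma$-comodule $U_\Sigma(P)$ to the cofree unstable $\Gamma$-comodule $U_\Gamma(P)$ on the underlying $A$-module of $P$: this is the mate of the evident identity $(\text{forget to }B\text{-mod})\circ\alpha_*=(B\otimes_A(-))\circ(\text{forget to }A\text{-mod})$ of left adjoints, since the right adjoint of $B\otimes_A(-)$ is restriction of scalars. Consequently $\alpha_*\alpha^*U_\Sigma(P)=B\otimes_A U_\Gamma(P)=U_\Sigma(P)$, so $\varepsilon$ is an isomorphism on cofree objects. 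For general $N$, the coaction $N\xrightarrow{\psi_N}U_\Sigma(N)$ exhibits $N$ as the equalizer of $U_\Sigma(\psi_N)$ and the comultiplication $U_\Sigma(N)\rightrightarrows U_\Sigma^2(N)$; applying the left-exact functor $\alpha^*$ and then the exact functor $\alpha_*$, and using that $\varepsilon$ is an isomorphism on the cofree terms $U_\Sigma(N)$ and $U_\Sigma^2(N)$, identifies $\alpha_*\alpha^*N$ compatibly with $N$ as this same equalizer; hence $\varepsilon$ is a natural isomorphism.

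Finally, the unit follows: by a triangle identity $\varepsilon_{\alpha_*M}\circ\alpha_*\eta_M=\mathrm{id}_{\alpha_*M}$, and since $\varepsilon_{\alpha_*M}$ is an isomorphism so is $\alpha_*\eta_M$; by exactness $\alpha_*(\ker\eta_M)=\ker(\alpha_*\eta_M)=0$ and $\alpha_*(\coker\eta_M)=\coker(\alpha_*\eta_M)=0$, so $\ker\eta_M=\coker\eta_M=0$ by conservativity, and $\eta_M$ is an isomorphism. Therefore $\alpha_*$ and $\alpha^*$ are inverse equivalences of categories, the unstable counterpart of the stable Theorem \ref{hovey's-theorem}.
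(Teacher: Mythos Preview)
Your overall strategy is correct and close to the paper's, but you are making the step you flag as ``the main obstacle'' much harder than it needs to be. The paper does \emph{not} redo Hovey's faithfully flat descent argument in the unstable setting. Instead it invokes the stable Theorem~\ref{hovey's-theorem} as a black box: since $\pi_*$ is an equivalence of abelian categories it is automatically exact and conservative, and because an unstable $\Gamma$-comodule map is in particular a stable one, with exactness in $\mathcal{U}_\Gamma$ detected on underlying modules, these two properties pass immediately to $\alpha_*$ (this is Lemma~\ref{hovey's-lemma}). So there is no need to carry the $C$-argument through the comonad $U_\Sigma$ or the equalizer defining $\alpha^*$; the obstacle you anticipate disappears.

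For the counit and unit, your argument and the paper's are minor reorganizations of each other. The paper tensors the defining equalizer of $\alpha^*(N)$ with $B$, uses $B\otimes_A U_\Gamma = U_\Sigma$ to recognize the result as the standard equalizer $U_\Sigma(N)\rightrightarrows U_\Sigma^2(N)$ presenting $N$, and then handles the unit by a second direct computation plus conservativity. Your route via $\alpha^*U_\Sigma\cong U_\Gamma$ (the paper isolates this as Lemma~\ref{unstable-cotensor-lemma}, proved by conservativity rather than by your mate argument) and the triangle identity is equally valid and arguably tidier. Either way, once exactness and conservativity of $\alpha_*$ are in hand the rest is formal.
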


The existence of the map $g$ satisfying the stated condition generalizes the condition of $A\to B$ being faithfully flat. 
 
The stable theorem on which this is based says
\begin{theorem}[Hopkins, Hovey, Hovey-Sadofsky]\label{hovey's-theorem}   Let $(A,\Gamma)\to (B,\Sigma)$ be a map of flat Hopf algebroids such that $\Sigma = B \otimes_{A} \Gamma \otimes_{A} B$, and assume there exists an algebra $C$ along with an algebra map $B \otimes_A \Gamma \xrightarrow{g} C$ such that the composite
\[A \xrightarrow{1 \otimes \eta_R} B \otimes_A \Gamma \xrightarrow{g} C\]
is a faithfully flat extension of $A$-modules.  Then  
\[\Gamma\text{-comod}  \xrightarrow{\pi_*} \Sigma\text{-comod}\]
is an equivalence of categories.   
\end{theorem}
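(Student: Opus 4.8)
The plan is to prove the theorem by faithfully flat descent for comodule categories, the argument of Hopkins, Hovey and Sadofsky carried out in detail in \cite{HOV}; I sketch its skeleton. By \cite{MR1} the functor $\pi_*$ is left adjoint to $\pi^*(N)=(\Gamma\otimes_A B)\cotensor_\Sigma N$, so it suffices to prove that the unit $\eta\colon\mathrm{id}\to\pi^*\pi_*$ and the counit $\varepsilon\colon\pi_*\pi^*\to\mathrm{id}$ are natural isomorphisms.

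The first and crucial step is to show that the hypothesis forces $\pi_*$ to be exact and conservative on comodule categories, even when $A\to B$ is not flat. The mechanism is that $A\xrightarrow{1\otimes\eta_R}B\otimes_A\Gamma\xrightarrow{g}C$ is faithfully flat, so $C\otimes_A(-)$ is exact and detects isomorphisms and vanishing of $A$-modules; and for a $\Gamma$-comodule $M$ the coaction $\psi_M\colon M\to\Gamma\otimes_A M$ is a split monomorphism of $A$-modules (retraction $\epsilon\otimes M$), so $\pi_*(M)=B\otimes_A M$ is a split $B$-submodule of $B\otimes_A\Gamma\otimes_A M$, which maps via $g\otimes M$ to the faithfully flat extension $C\otimes_A M$. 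Running this comparison along the cobar resolution of $M$ and using $\Sigma=B\otimes_A\Gamma\otimes_A B$ yields the exactness and conservativity of $\pi_*$ on $\Gamma$-comodules; this is the one genuinely delicate point and is what replaces the simpler faithful-flatness argument.

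Given that, the second step verifies $\eta$ and $\varepsilon$ directly on cofree (extended) comodules. Using $\Sigma=B\otimes_A\Gamma\otimes_A B$ one computes $\pi^*(\Sigma\otimes_B L)\cong\Gamma\otimes_A L$ for a $B$-module $L$ (a cotensor against a cofree comodule collapses), hence $\pi_*\pi^*(\Sigma\otimes_B L)\cong B\otimes_A\Gamma\otimes_A L\cong\Sigma\otimes_B L$, under which $\varepsilon$ becomes the identity; dually, unwinding $(\Gamma\otimes_A B)\cotensor_\Sigma(B\otimes_A\Gamma\otimes_A K)$ for an $A$-module $K$ with the counit and comultiplication identities for $\Gamma$ and $\Sigma$ shows $\eta$ is an isomorphism on $\Gamma\otimes_A K$. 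The third step propagates these: every $\Gamma$-comodule $M$ sits in an exact $A$-split sequence $0\to M\to\Gamma\otimes_A M\to\Gamma\otimes_A\bar\Gamma\otimes_A M$, and likewise every $\Sigma$-comodule $N$ in $0\to N\to\Sigma\otimes_B N\to\Sigma\otimes_B\bar\Sigma\otimes_B N$; since $\pi_*$ is exact and $\pi^*$ left exact, $\pi^*\pi_*$ and $\pi_*\pi^*$ are left exact, and a four-lemma chase on the first two terms of these resolutions upgrades the isomorphisms from the cofree terms to $M$ and $N$.

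The hard part is the first step: extracting exactness and conservativity of $\pi_*$ from the indirect faithful-flatness hypothesis packaged in $g$ and $C$ rather than from a direct faithful flatness of $A\to B$. This is Hovey's strengthening of the classical descent theorem, and it is exactly what is needed here for the $BP\to E_n$ change of rings, where the coefficient map is not faithfully flat but the relevant base change of comodule categories is still an equivalence.
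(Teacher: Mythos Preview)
The paper does not give its own proof of this theorem. It is stated as a result of Hovey with citation \cite{HOV}, and the surrounding text explicitly says ``Hovey's paper \cite{HOV} has a detailed proof of the theorem in the form that we need \ldots\ The proof in \cite{HOV} is based on a study of the category of quasi-coherent sheaves on a groupoid scheme.'' The theorem is then used as a black box to deduce Lemma~\ref{hovey's-lemma} (exactness and conservativity of $\alpha_*$), which in turn drives the proof of Theorem~\ref{unstable-faithflat-equiv}. So there is no in-paper proof to compare your proposal against.

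That said, your sketch is a reasonable outline of the comodule-algebraic approach to this descent theorem, and it differs in presentation from what the paper attributes to Hovey: you work directly with cotensor, cofree comodules, and the cobar resolution, whereas Hovey's published argument is phrased in terms of quasi-coherent sheaves on the associated groupoid schemes (the two are of course equivalent, but the geometric packaging organizes the bookkeeping differently). Your identification of the ``hard part'' is exactly right: the whole point of the $C$-hypothesis is to recover exactness and conservativity of $\pi_*$ on comodules without assuming $A\to B$ is itself faithfully flat. Your mechanism for this---using the $A$-split coaction to embed $B\otimes_A M$ as a retract of $B\otimes_A\Gamma\otimes_A M$ and then pushing to $C\otimes_A M$---is the correct idea, though as written it is only a gesture: one must actually produce a natural isomorphism (for $\Gamma$-comodules $M$) identifying $B\otimes_A M$ with something to which $C\otimes_A(-)$ applies directly, typically via the comodule-twist $\Gamma\otimes_A M\cong M\otimes_A\Gamma$. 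If you intend to include a proof rather than a citation, that step needs to be written out carefully; otherwise, citing \cite{HOV} as the paper does is entirely appropriate.
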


This enables the following lemma.
\begin{lemma}\label{hovey's-lemma}  Let $(A,\Gamma) \to (B,\Sigma)$ satisfy the hypotheses of \ref{unstable-faithflat-equiv}.  Recall that $\alpha_*:\mathcal{U}_{\Gamma} \to \mathcal{U}_{\Sigma}$ is given by $\alpha_*(M) = B\otimes_{A}M$. 
Let $f:M\to N$ be a morphism in $\mathcal{U}_{\Gamma}$.  Then $\alpha_*(f)$ is an isomorphism if and only if $f$ is an isomorphism.
Furthermore $\alpha_*$ is exact.
 \end{lemma}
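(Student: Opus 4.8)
The plan is to deduce Lemma~\ref{hovey's-lemma} from Theorem~\ref{unstable-faithflat-equiv} together with standard facts about equivalences of abelian categories. First I would note that Theorem~\ref{unstable-faithflat-equiv} applies under exactly the hypotheses in force here (the map $g$ to a faithfully flat $C$), so $\alpha_*:\mathcal{U}_{\Gamma}\to\mathcal{U}_{\Sigma}$ is an equivalence of categories with inverse $\alpha^*$. An equivalence of categories reflects and preserves isomorphisms: if $f$ is an isomorphism then trivially so is $\alpha_*(f)$; conversely, if $\alpha_*(f)$ is an isomorphism, apply the quasi-inverse $\alpha^*$ to get that $\alpha^*\alpha_*(f)$ is an isomorphism, and then use the natural isomorphism $\alpha^*\alpha_*\cong \mathrm{id}_{\mathcal{U}_{\Gamma}}$ (part of the data of the adjoint equivalence) together with naturality to conclude that $f$ itself is an isomorphism. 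This gives the ``if and only if'' statement.

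For exactness of $\alpha_*$, the cleanest route is again to invoke the equivalence: any equivalence between abelian categories is automatically exact, since it preserves all limits and colimits (being both a left and a right adjoint), in particular kernels and cokernels, hence short exact sequences. Alternatively, and perhaps worth remarking for the reader who wants a hands-on check, $\alpha_*(M) = B\otimes_A M$ on underlying modules, and one can observe that the faithfully flat hypothesis forces the relevant base-change to be exact on the level of $A$-modules; but the equivalence argument is shorter and self-contained given what has already been set up, so I would lead with that and mention the module-level picture only in passing.

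The one point requiring a little care — and the main obstacle, such as it is — is making sure the forgetful functors to modules interact correctly with ``isomorphism'' and ``exact'': a morphism in $\mathcal{U}_{\Gamma}$ is an isomorphism precisely when it is an isomorphism of underlying $A$-modules (since $\mathcal{U}_{\Gamma}$ is abelian and the forgetful functor is faithful and exact, reflecting isos), and a sequence is exact in $\mathcal{U}_{\Gamma}$ iff it is exact as $A$-modules. Once this is pinned down, both claims are immediate from Theorem~\ref{unstable-faithflat-equiv}. So the proof is essentially: ``$\alpha_*$ is half of an equivalence of abelian categories, and equivalences of abelian categories are exact and reflect isomorphisms.''

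\begin{proof}
By hypothesis we are in the situation of Theorem~\ref{unstable-faithflat-equiv}, so $\alpha_*$ is an equivalence of categories with quasi-inverse $\alpha^*$, and in particular there is a natural isomorphism $\eta: \mathrm{id}_{\mathcal{U}_{\Gamma}} \xrightarrow{\ \sim\ } \alpha^*\alpha_*$. If $f:M\to N$ is an isomorphism in $\mathcal{U}_{\Gamma}$, then $\alpha_*(f)$ is an isomorphism since functors preserve isomorphisms. Conversely, if $\alpha_*(f)$ is an isomorphism, then $\alpha^*\alpha_*(f)$ is an isomorphism, and naturality of $\eta$ gives a commutative square identifying $\alpha^*\alpha_*(f)$ with $f$ up to the isomorphisms $\eta_M$ and $\eta_N$; hence $f$ is an isomorphism. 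This proves the first assertion.

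For exactness, note that $\mathcal{U}_{\Gamma}$ and $\mathcal{U}_{\Sigma}$ are abelian categories and $\alpha_*$, being one half of an adjoint equivalence, is simultaneously a left and a right adjoint. A left adjoint preserves all colimits, in particular cokernels, and a right adjoint preserves all limits, in particular kernels; a functor between abelian categories that preserves kernels and cokernels is exact. Therefore $\alpha_*$ is exact.
\end{proof}
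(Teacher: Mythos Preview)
Your argument is circular. You prove the lemma by invoking Theorem~\ref{unstable-faithflat-equiv}, but in the paper Theorem~\ref{unstable-faithflat-equiv} is proved \emph{using} Lemma~\ref{hovey's-lemma}: look at the proof of Theorem~\ref{unstable-faithflat-equiv}, where Lemma~\ref{hovey's-lemma} is invoked several times (to pass exactness of equalizer diagrams through $B\otimes_A -$, and at the end to conclude that the unit $M\to\alpha^*\alpha_*M$ is an isomorphism from the fact that it becomes one after applying $B\otimes_A -$). So the logical order is Theorem~\ref{hovey's-theorem} $\Rightarrow$ Lemma~\ref{hovey's-lemma} $\Rightarrow$ Theorem~\ref{unstable-faithflat-equiv}, and you have reversed the last arrow.

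The paper's actual proof uses the \emph{stable} equivalence, Theorem~\ref{hovey's-theorem} (Hovey's theorem), which is quoted from the literature and does not depend on anything here. Since $\pi_*$ is an equivalence of abelian categories, it is exact and reflects isomorphisms. Then one observes that an unstable $\Gamma$-comodule map is in particular a stable $\Gamma$-comodule map, and that a sequence in $\mathcal{U}_{\Gamma}$ is exact if and only if it is exact in $\Gamma$-comod (exactness is detected on underlying $A$-modules). Since $\alpha_*$ and $\pi_*$ agree on underlying modules, the exactness and isomorphism-reflection for $\alpha_*$ follow from those for $\pi_*$. Your ``alternative'' remark about working at the module level is in fact much closer to the intended argument than the one you chose to lead with; the missing ingredient is that you should appeal to Theorem~\ref{hovey's-theorem} rather than to Theorem~\ref{unstable-faithflat-equiv}.
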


\begin{proof}
By Theorem \ref{hovey's-theorem} the functor $\pi_*$ is exact since an equivalence of abelian categories is an exact functor.  An unstable $\Gamma$-comodule map is a stable $\Gamma$-comodule map and a sequence in $\mathcal{U}_{\Gamma}$ is exact if and only if it's exact in $\Gamma\text{-comod}$, so $\alpha_*$ is exact on  $\mathcal{U}_{\Gamma}$.  A similar argument gives the first statement.
\end{proof}

\begin{proof}[Proof of Theorem \ref{unstable-faithflat-equiv}]  
For $N \in \mathcal{U}_{\Sigma}$ consider the counit of the adjunction
\[ \alpha_*\alpha^* N \xrightarrow{} N.\]
By Lemma \ref{hovey's-lemma} $\alpha_*\alpha^* N = B\otimes_{A} \alpha^*(N)$ sits in an equalizer diagram 
 
 \begin{center}
\begin{tikzpicture}
\node (a) at (0,0) {$B\otimes_{A} \alpha^*(N)$};
\node (b) at (3,0) {$B\otimes_{A} U_{\Gamma}(N)  $};
\node (c) at (7.5,0) {$B\otimes_{A}  U_{\Gamma}U_{\Sigma }(N)$};
\path[->,font=\scriptsize,>=angle 90]
(a) edge (b)
([yshift= 3pt]b.east) edge node[above] {$B\otimes_{A} U_{\Gamma}(\psi_{N}) $} ([yshift= 3pt]c.west)
([yshift= -3pt]b.east) edge node[below] {$B\otimes_{A} U_{\Gamma}(\beta)\circ \Delta_{\Gamma}  $} ([yshift= -3pt]c.west);
\end{tikzpicture}
\end{center}
\noindent which is the same thing as 
 \begin{center}
\begin{tikzpicture}
\node (a) at (0,0) {$B\otimes_{A} \alpha^*(N)$};
\node (b) at (2.5,0) {$ U_{\Sigma}(N)  $};
\node (c) at (6,0) {$U_{\Sigma}U_{\Sigma }(N)$};
\path[->,font=\scriptsize,>=angle 90]
(a) edge (b)
([yshift= 3pt]b.east) edge node[above] {$ U_{\Sigma}(\psi_{N}) $} ([yshift= 3pt]c.west)
([yshift= -3pt]b.east) edge node[below] {$  \Delta_{\Sigma}  $} ([yshift= -3pt]c.west);
\end{tikzpicture}
\end{center}
\noindent because $\Sigma = B\otimes_{A} \Gamma \otimes_{A} B$.  It follows that $B\otimes_{A}\alpha^*N \cong N$.  

For $M \in \mathcal{U}_{\Gamma}$ look at the unit of the adjunction
\[M \xrightarrow{} \alpha^*\alpha_* M.\]
The target sits in an equalizer diagram
 \begin{center}
\begin{tikzpicture}
\node (a) at (0,0) {$ \alpha^*\alpha_* M$};
\node (b) at (2.5,0) {$ U_{\Gamma}(B\otimes_{A} M)  $};
\node (c) at (6,0) {$U_{\Gamma}U_{\Sigma }(B\otimes_{A} M)$};
\path[->,font=\scriptsize,>=angle 90]
(a) edge (b)
([yshift= 3pt]b.east) edge node[above] {$  $} ([yshift= 3pt]c.west)
([yshift= -3pt]b.east) edge node[below] {$  $} ([yshift= -3pt]c.west);
\end{tikzpicture}
\end{center}
\noindent Tensor this with $B$
 \begin{center}
\begin{tikzpicture}
\node (a) at (0,0) {$B\otimes_{A} \alpha^*\alpha_* M$};
\node (b) at (3.5,0) {$B\otimes_{A} U_{\Gamma}(B\otimes_{A} M)  $};
\node (c) at (8,0) {$B\otimes_{A}U_{\Gamma}U_{\Sigma }(B\otimes_{A} M)$};
\path[->,font=\scriptsize,>=angle 90]
(a) edge (b)
([yshift= 3pt]b.east) edge node[above] {$  $} ([yshift= 3pt]c.west)
([yshift= -3pt]b.east) edge node[below] {$  $} ([yshift= -3pt]c.west);
\end{tikzpicture}
\end{center}
\noindent which gives
 \begin{center}
\begin{tikzpicture}
\node (a) at (0,0) {$B\otimes_{A} \alpha^*\alpha_* M$};
\node (b) at (3,0) {$ U_{\Sigma}(B\otimes_{A} M)  $};
\node (c) at (7,0) {$U_{\Sigma}U_{\Sigma }(B\otimes_{A} M)$};
\path[->,font=\scriptsize,>=angle 90]
(a) edge (b)
([yshift= 3pt]b.east) edge node[above] {$  $} ([yshift= 3pt]c.west)
([yshift= -3pt]b.east) edge node[below] {$  $} ([yshift= -3pt]c.west);
\end{tikzpicture}
\end{center}
\noindent So $B\otimes_{A} \alpha^*\alpha_*M \cong B\otimes_{A} M$.  The unit of the adjunction is an unstable $\Gamma$-comodule map so Lemma \ref{hovey's-lemma} applies and we have ${M  \xrightarrow{\cong}  \alpha^*\alpha_*M}$. 
\end{proof}

This equivalence of categories induces a change of rings isomorphism of Ext groups.  To be explicit we have
\begin{theorem}\label{unstable-faithflat-cor}
 Assume the hypotheses of \ref{unstable-faithflat-equiv}.
Then for any unstable $\Gamma$-comodule $M$, there is an isomorphism
\[ \ext^{s}_{\mathcal{U}_{\Gamma}}(A[t],M) \to   \ext^{s}_{\mathcal{U}_{\Sigma}}(B[t],B\otimes_{A} M).\]
\end{theorem}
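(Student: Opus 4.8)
The plan is to deduce the isomorphism from Theorem \ref{unstable-faithflat-equiv}, using the general principle that an equivalence of abelian categories preserves $\ext$. Under the equivalence $\alpha_*$ one has $\alpha_*(A)=B\otimes_A A=B$ and $\alpha_*(M)=B\otimes_A M$, so the target of the asserted map is exactly $\ext_{\mathcal{U}_{\Sigma}}(\alpha_*(A),\alpha_*(M))$, and it suffices to see that $\alpha_*$ carries $\ext_{\mathcal{U}_{\Gamma}}(A,M)$ onto this group. Concretely I would realize this as follows. By Theorem \ref{unstable-faithflat-equiv} the functor $\alpha^*$ is an inverse equivalence to $\alpha_*$, in particular exact, and it is right adjoint to $\alpha_*$; hence $\alpha^*$ preserves injective objects. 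Choosing an injective resolution $B\otimes_A M\to I^{\bullet}$ in $\mathcal{U}_{\Sigma}$ and applying $\alpha^*$ yields an injective resolution $M\cong\alpha^*\alpha_*M\to\alpha^*I^{\bullet}$ in $\mathcal{U}_{\Gamma}$. The adjunction then gives a natural isomorphism of cochain complexes
\[\hom_{\mathcal{U}_{\Gamma}}(A,\alpha^*I^{\bullet})\;\cong\;\hom_{\mathcal{U}_{\Sigma}}(\alpha_*(A),I^{\bullet})\;=\;\hom_{\mathcal{U}_{\Sigma}}(B,I^{\bullet}),\]
and passing to cohomology produces the isomorphism in the statement.

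The step I expect to be the main obstacle is reconciling this with the way $\ext_{\mathcal{U}}$ was actually defined, namely as the cohomology of the cobar complex of the comonad $U$ rather than as a right derived functor. To close the gap one needs to know that the categories $\mathcal{U}_{\Gamma}$ and $\mathcal{U}_{\Sigma}$ have enough injectives and that the cofree $U$-coalgebras $U^{n}(M)$ occurring in the cobar resolution are acyclic for $\hom_{\mathcal{U}}(A,-)$, so that the cobar cohomology agrees with the Ext-groups of the abelian category $\mathcal{U}$. This is a statement about the category of unstable comodules alone and has nothing to do with the map $(A,\Gamma)\to(B,\Sigma)$; once it is available, the theorem is immediate from Theorem \ref{unstable-faithflat-equiv} and the adjunction.

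Alternatively, to stay entirely within the cobar framework one can check directly that $\alpha_*$ is compatible with the two comonads — that is, that the natural transformation $\alpha_*U_{\Gamma}\to U_{\Sigma}\alpha_*$ induced by the morphism of comonads $U_{\Gamma}\to U_{\Sigma}$ used to build the adjoint pair is an isomorphism intertwining the comultiplications and counits. Granting this, $\alpha_*$ sends the cobar resolution of $M$ to that of $B\otimes_A M$ term by term, compatibly with all coface and codegeneracy maps, and since transporting $\hom_{\mathcal{U}_{\Gamma}}(A,-)$ across the equivalence turns it into $\hom_{\mathcal{U}_{\Sigma}}(B,-)$, the two cobar complexes are isomorphic. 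Here the work would lie in verifying the comonad compatibility, which should fall out of the faithfully flat hypothesis on $A\to B\otimes_A\Gamma\to C$ by the same equalizer bookkeeping used in the proof of Theorem \ref{unstable-faithflat-equiv}.
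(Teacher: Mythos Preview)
Your first approach is correct and is the cleanest abstract argument; you have also correctly isolated the only real gap, namely that Ext here is \emph{defined} as cobar cohomology rather than as a derived functor.  The paper sidesteps exactly this issue by staying inside the cobar framework.  It first proves the compatibility $\alpha^{*}U_{\Sigma}(N)\cong U_{\Gamma}(N)$ for $N\in\mathcal{U}_{\Sigma}$ (Lemma~\ref{unstable-cotensor-lemma}), then applies $\alpha^{*}$ to the $\Sigma$-cobar resolution of $N=\alpha_{*}M$; by Lemma~\ref{hovey's-lemma} (tensoring back with $B$ detects exactness) the result is an acyclic complex, and by the compatibility lemma its terms are of the form $U_{\Gamma}(-)$.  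One then reads off $\Hom_{\mathcal{U}_{\Gamma}}(A,U_{\Gamma}(-))$ directly from the cobar formalism and recovers the $\Sigma$-cobar complex.  This is essentially your second alternative, executed via $\alpha^{*}$ rather than $\alpha_{*}$.  The payoff of going through $\alpha^{*}$ is that the compatibility $\alpha^{*}U_{\Sigma}\cong U_{\Gamma}$ is almost immediate from Lemma~\ref{hovey's-lemma}, whereas your dual statement $\alpha_{*}U_{\Gamma}\cong U_{\Sigma}\alpha_{*}$ unwinds to $B\otimes_{A}U_{\Gamma}(M)\cong B\otimes_{A}U_{\Gamma}(B\otimes_{A}M)$, which is not obvious from the definitions and in effect requires the equivalence already.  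Your abstract route buys generality and brevity once one accepts that cobar Ext agrees with categorical Ext; the paper's route avoids that identification entirely at the cost of the extra lemma.
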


First we make an observation.

\begin{lemma}\label{unstable-cotensor-lemma}
For an unstable $\Sigma$-comodule $N$  we have $\alpha^*U_{\Sigma}(N) = U_{\Gamma}(N)$.
\end{lemma}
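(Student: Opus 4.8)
The plan is to deduce the identification from universal properties, avoiding a direct analysis of the equalizer in Definition \ref{unstable-adjoint-functors}. Write $G_{\Gamma}\colon\mathcal{U}_{\Gamma}\to A\text{-mod}$ and $G_{\Sigma}\colon\mathcal{U}_{\Sigma}\to B\text{-mod}$ for the forgetful functors. Each is left adjoint to the corresponding cofree functor, whose underlying endofunctor is the defining comonad; thus, for an unstable $\Sigma$-comodule $N$, the object $U_{\Sigma}(N)$ occurring in the statement is the cofree unstable $\Sigma$-comodule on the $B$-module $G_{\Sigma}(N)$, and $U_{\Gamma}(N)$ is the cofree unstable $\Gamma$-comodule on the underlying $A$-module of $N$. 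The one structural fact needed is that $\alpha_{*}$ is a lift of the extension-of-scalars functor $B\otimes_{A}-$ along the forgetful functors: the formula $\alpha_{*}(M)=B\otimes_{A}M$ says precisely that $G_{\Sigma}\circ\alpha_{*}=(B\otimes_{A}-)\circ G_{\Gamma}$.

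First I would assemble the following chain of isomorphisms, natural in $M\in\mathcal{U}_{\Gamma}$:
\begin{align*}
\Hom_{\mathcal{U}_{\Gamma}}(M,\alpha^{*}U_{\Sigma}(N))
&\cong \Hom_{\mathcal{U}_{\Sigma}}(\alpha_{*}M,\,U_{\Sigma}(N))\\
&\cong \Hom_{B\text{-mod}}(G_{\Sigma}\alpha_{*}M,\,G_{\Sigma}N)\\
&\cong \Hom_{B\text{-mod}}(B\otimes_{A}G_{\Gamma}M,\,G_{\Sigma}N)\\
&\cong \Hom_{A\text{-mod}}(G_{\Gamma}M,\,G_{\Sigma}N)\\
&\cong \Hom_{\mathcal{U}_{\Gamma}}(M,\,U_{\Gamma}(N)),
\end{align*}
in which the first isomorphism uses $\alpha_{*}\dashv\alpha^{*}$, the second and last use the forgetful--cofree adjunctions for $U_{\Sigma}$ and for $U_{\Gamma}$, the third is the identity $G_{\Sigma}\alpha_{*}=(B\otimes_{A}-)G_{\Gamma}$, and the fourth is the extension/restriction-of-scalars adjunction for $A\to B$ (on the right-hand side $G_{\Sigma}N$ is regarded as an $A$-module via $A\to B$). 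The Yoneda lemma then yields $\alpha^{*}U_{\Sigma}(N)\cong U_{\Gamma}(N)$ as unstable $\Gamma$-comodules, naturally in $N$, which is the assertion.

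A more computational route is also available: unwind the equalizer defining $\alpha^{*}U_{\Sigma}(N)$, observing that $U_{\Sigma}(N)$ carries the cofree structure map $\psi_{U_{\Sigma}(N)}=\Delta_{\Sigma}$, and identify that equalizer with $U_{\Gamma}(N)$ using the unstable analogue of the contraction identity for cotensoring with a cofree comodule, together with the counit and coassociativity axioms for the two comonads and the naturality of $\beta$. I expect the main obstacle to be organizational rather than mathematical: one must track which forgetful functor is in play at each stage and, before anything else, pin down that $U_{\Sigma}(N)$ really denotes the cofree comodule on $G_{\Sigma}(N)$ and that $\alpha_{*}$ lifts $B\otimes_{A}-$. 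Once those two points are settled the lemma is formal; in particular the equivalence of categories from Theorem \ref{unstable-faithflat-equiv} is not needed for it, only the adjunction $\alpha_{*}\dashv\alpha^{*}$.
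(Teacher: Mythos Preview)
Your proof is correct and takes a genuinely different route from the paper's. The paper argues by producing a comparison map $U_{\Gamma}(N)\to\alpha^{*}U_{\Sigma}(N)$, tensoring with $B$ to obtain $U_{\Sigma}(N)\xrightarrow{\cong}U_{\Sigma}(N)$, and then invoking Lemma~\ref{hovey's-lemma} (which rests on Hovey's stable equivalence, hence on the faithfully flat hypothesis) to conclude that the original map was already an isomorphism. You instead stack the adjunctions $\alpha_{*}\dashv\alpha^{*}$, $G_{\Sigma}\dashv U_{\Sigma}$, extension/restriction of scalars along $A\to B$, and $G_{\Gamma}\dashv U_{\Gamma}$, using only the compatibility $G_{\Sigma}\alpha_{*}=(B\otimes_{A}-)G_{\Gamma}$; Yoneda then gives the identification. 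This is purely formal and, as you note, does not use the faithfully flat hypothesis at all, so your argument proves a strictly more general statement than the paper's proof does. The paper's approach has the minor expository advantage of reusing the same descent trick employed throughout the section, but your argument is cleaner and makes transparent that the lemma is really a composite-of-right-adjoints statement.
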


\begin{proof}  
We have 
\begin{equation}\label{first-map} 
 U_{\Gamma}(N) \to \alpha^*U_{\Sigma}(N).
 \end{equation}  
  Tensor with $B$ to get 
\[B\otimes_{A} U_{\Gamma}(N) \to B\otimes_{A} \alpha^*U_{\Sigma}(N)\]
which is 
\[U_{\Sigma}(N) \xrightarrow{\cong} U_{\Sigma}(N).\]
By Lemma \ref{hovey's-lemma} the map \ref{first-map} is an isomorphism.
\end{proof}

\begin{proof}[Proof of \ref{unstable-faithflat-cor}]
Let 
\[N \to U_{\Sigma}(N) \to U_{\Sigma}^{2}(N) \to U_{\Sigma}^{3}(N) \to \dots \]
be the unstable cobar resolution for $N$.  Apply $\alpha^*$ to get
\begin{equation}\label{alpha-unstable-cobar}
\alpha^{*}N \to \alpha^{*}U_{\Sigma}(N) \to  \alpha^{*}U_{\Sigma}^{2}(N) \to  \alpha^{*}U_{\Sigma}^{3}(N) \to \dots
\end{equation}
Since $\alpha^*$ is an equivalence of abelian categories it is exact so \ref{alpha-unstable-cobar} is exact.

By Lemma \ref{unstable-cotensor-lemma} $\alpha^{*}U_{\Sigma}(N) = U_{\Gamma}(N)$ so \ref{alpha-unstable-cobar} is a resolution of $\alpha^*N$ by models in the category of unstable $\Gamma$-comodules
\[\alpha^{*}N \to U_{\Gamma}(N) \to  U_{\Gamma}U_{\Sigma}^{}(N) \to U_{\Gamma}U_{\Sigma}^{2}(N) \to \dots ,\]
and hence can be used to compute $\ext$ (see for example Theorem 2.3 of \cite{BCR}).   
Apply $\Hom_{\mathcal{U}_{\Gamma}}(A,\_)$ to get 
\[N \to U_{\Sigma}(N) \to U_{\Sigma}^{2}(N) \dots\]
which is the $\Sigma$-cobar complex for $N$.
This shows that 
\[ \ext_{\mathcal{U}_{\Gamma}}(A,\alpha^{*}N) \to   \ext_{\mathcal{U}_{\Sigma}}(B,N)\]
is an isomorphism.
Apply this to the case $N=\alpha_*M$ to get the result.
\end{proof}

%
%
\section{Morava $E$-theory}\label{Morava-$E$-theory}

This section is based on the work of Morava \cite{MOR}. We will closely follow the exposition of Devinatz \cite{DEV}.  Let $W\mathbf{F}_{p^n}$ denote the Witt ring over $\mathbf{F}_{p^n}$,  the complete local $p$-ring having $\mathbf{F}_{p^n}$ as its residue field.  Let $\sigma$ denote the generator of the Galois group $\text{Gal} = \text{Gal}(\mathbf{F}_{p^n}/ \mathbf{F}_{p})$ which is cyclic of order $n$.  Note that $\text{Gal}$ acts on $W\mathbf{F}_{p^n}$ by 
\[(\sum_{i} w_i p^i)^{\sigma} = \sum_{i} w_i^p p^i\]
where the coefficients $w_i$ are multiplicative representatives.

Let $\Gamma_n$ be the height $n$ Honda formal group law over a field $k$ of characteristic $p$.   The endomorphism ring of $\Gamma_n$ over $k=\mathbf{F}_{p^n}$, 
denoted $\End_n$, is known and is given by (see \cite{RA2})
\[\End_n =  W\mathbf{F}_{p^n}\langle S \rangle /(S^n=p, Sw = w^{\sigma}S) .\]
Here one can think of $S$ as a non-commuting indeterminant.  

We will think of $\End_n$ as a topological monoid under multiplication.   The submonoid consisting of invertible elements is the Morava stabilizer group $S_n = (\End_n)^{\times}$.   Also, $\text{Gal}$ acts on $\End_n$ and hence on $S_n$, and we can form the semidirect product $G_n=S_n \rtimes \text{Gal}$, sometimes referred to as the extended stabilizer group (see for example \cite{DH04}). 

Morava $E$-theory, also referred to as Lubin-Tate theory, is a Landweber exact homology theory represented by a spectrum denoted $E_n$ and corresponding to the  Hopf algebroid

\[(E_{n*},E_{n*}E_{n})  =  (E_{n*},E_{n*}\otimes_{BP_*} BP_*BP \otimes_{BP_*} E_{n*})   .\]
The completion of this Hopf algebroid is
\[(E_{n*},\text{Map}_c(G_n , Z_p)\hat{\otimes}_{Z_p} E_{n*})\]
which we will talk about in the next section (Proposition 2.2 of  \cite{DH04}).
Here $\text{Map}_{c}$ refers to the set of continuous maps. 
The coefficient ring has the following description:  
\[E_{n*} = W\mathbf{F}_{p^n}[[u_1,\dots,u_{n-1}]][u,u^{-1}].\]
The ring $E_{n*}$ is graded by $|u_i| = 0$ and $|u| = -2$.  
There is a graded map of coefficients $BP_*\xrightarrow{\lambda} E_{n*}$ given by 
\begin{equation}\label{coefficient-map}\lambda(v_i) = 
     \begin{cases} u_iu^{1-p^i} &  i<n \\ u^{1-p^n} &  i = n \\ 0 & i> n.
     \end{cases}
\end{equation}

We have the Hopf algebroid associated to Morava $K$-theory 
$(K(n)_*, \Sigma(n))$, where $K(n)_* = \mathbf{F}_p[v_n,v_n^{-1}]$ and 
\[\Sigma(n) = K(n)_*\otimes_{BP_*} BP_*BP  \otimes_{BP_*}  K(n)_*\]  (note that $\Sigma(n) \neq K(n)_*(K(n))$.  See \cite{MR1}).  

We consider the composite map of Hopf algebroids
\begin{equation}\label{reduction}
(B(n)_*, \Gamma_{B(n)_*}) \xrightarrow{} (K(n)_*, \Sigma(n) ) \xrightarrow{} ({E_{n}}_{*}/I_{n},    {E_{n}}_{*}E_{n}/I_{n}) 
 \end{equation}
 We wish to show these are all in the category $\mathcal{H}$. 
 
 First we need to establish a fact about the Hopf ring for $P(n)_* = BP_*/I_{n}$. 
 Again we are grateful for help from Martin Bendersky who suggested the use of the Boardman basis for $QBP_*(\underline{BP}_*)$ \cite{BOAR} and the use of Ravenel and Wilson's calculation of the Hopf ring for $P(n)$ in \cite{RW2}.

We start with the Hopf ring $BP_*(\underline{BP}_*)$ and tensor on the right and the left with $P(n)_*$.  This gives an algebraically defined Hopf ring which corresponds to the factors described on page 3 of \cite{RW2}  for which $a^I =1$.  We further simplify by only considering the even dimensional spaces and the indecomposables.  The result can be extended to the odd dimensional spaces and the primitives by standard arguments.  Denote this object by $QP(n)^{*}_{*}$.  Consider the Boardman basis for the indecomposables $QBP_*(\underline{BP}_*)$, which consists of monomials $v^Ih^{J'}$ and {\em excluding} all monomials of the form $v_{d_0}^{}v_{d_1}^p v_{d_2}^{p^2}\cdots v_{d_l}^{p^l} h_l$, $l\ge 0$, $d_{0} \le d_{1} \le \dots \le d_{l}$,  and any monomial divisible by one of this form. By \cite{BOAR} this is a basis for  $QBP_*(\underline{BP}_*)$ as a {\em right} $BP_*$-module.

\begin{theorem}\label{Boardman-basis-for-P(n)}  The image of the Boardman basis in $QP(n)^{*}_{*}$ is a basis for $QP(n)^{*}_{*}$ as a free right $P(n)_*$-module.   The monomials of the form $v^Ih^{J'}[v^K]$, where $v^I$ and $v^K$ are in $P(n)_*$ and $v^Ih^{J'}$ satisfies the Boardman condition above, give a basis for $QP(n)^{*}_{*}$ as an $F_p$-vector space. 
\end{theorem}

\begin{proof}  We will follow the exposition and results of Boardman \cite{BOAR}, particularly pages 10-12.    Let $w_k$ denote $[v_k]$, and let $W_n$ be the set of monomials in $\{w_n, w_{n+1}, \dots \}$. Similarly $V_n$ will denote monomials in $\{v_n,v_{n+1}, \dots \}$ and $H_m$ will be monomials in $\{h_m,h_{m+1}, \dots\}$.    Boardman defines the Poincare series of a monomial 
$x\in BP_{2i}(\underline{BP}_{2n})$ by $P(x) = s^{i}t^{i-n}$.   For example
\[P(v_i)=s^{p^i-1}t^{p^i-1},\,\,\,P(h_j)=s^{p^j}t^{p^j-1},\,\,\,P(w_k)=t^{p^k-1},\,\,\,P(xy) = P(x)P(y)\]
and for a family $S$, he defines $P(S)$ to be $\Sigma_{x\in S}P(x)$.  For families $S$ and $T$, we have $P(ST)=P(S)P(T)$. 

Boardman observes the formulas 
\[
P(V_k) = \Pi_{r=k}^{\infty} (1-P(v_r))^{-1};\quad
P(H_m) = \Pi_{r=m}^{\infty} (1-P(h_r))^{-1};\quad\text{etc.} 
\]
which are useful. 

The family $A_{k,m}\subset V_kH_m$ is defined by excluding monomials of the form $v_{i_m}^{p^m}v_{i_{m+1}}^{p^{m+1}} \cdots v_{i_l}^{p^l} h_l$, $l \ge m$, $i_{m}\le i_{m+1} \le \dots \le i_{l}$, and any multiple of such.   Boardman proves that the Poincare series satisfy
\begin{equation}\label{boardmans-formula}P(A_{k,m}) = P(V_k)P(H_m)P(H_{k+m})^{-1}.\end{equation}

Note that $A_{n,0}$ is the image of the Boardman basis under the map $BP \to P(n)$, i.e. the image of the Boardman basis by modding out by $I_n$ on the left.    So $A_{n,0} \subset V_{n}H_{0}$ and excludes monomials of the form $v_{i_0}^{}v_{i_{1}}^{p} \cdots v_{i_l}^{p^l} h_l$.  Thus $P(A_{n,0}) = P(V_n)P(H_0)P(H_n)^{-1}$.   
It follows that the Poincare series for the free right $P(n)_*$ module on the image of the Boardman basis is given by $P(V_n)P(H_0)P(H_n)^{-1}P(W_n)$.  
We wish to compare this to the Ravenel-Wilson basis given in \cite{RW2}.  

Define $R_{k,m} \subset H_kW_m$ by excluding monomials $h_{j_m}^{p^m}h_{j_{m+1}}^{p^{m+1}} \cdots h_{j_l}^{p^l} w_l$, $l\ge m$, $j_{m}\le j_{m+1} \le \dots \le j_{l}$.   Note that $R_{0,n}$ is the Ravenel-Wilson basis of $n$-allowable monomials which exhibit  $QP(n)^{*}_{*}$ as a free left $P(n)_*$-module.  

We claim
\begin{equation}\label{poincare-ravenel-wilson}P(R_{k,m}) = P(H_k)P(W_m)P(H_{k+m})^{-1}.\end{equation}

Granting this, the Poincare series for $QP(n)^{*}_{*}$ is given by 
\[P(V_n)P(R_{0,n}) = P(V_n)P(H_0)P(H_n)^{-1}P(W_n).\]
This is the same as the Poincare series of the Boardman basis mod $I_n$. We know the Boardman basis spans  $QP(n)^{*}_{*}$ because the map $BP\to P(n)$ is onto. Therefore the image of the Boardman basis is a basis for $QP(n)^{*}_{*}$ as a free right $P(n)_*$-module. 

To prove the claim  (\ref{poincare-ravenel-wilson}), we follow the very same argument given by Boardman in \cite{BOAR} to prove (\ref{boardmans-formula}).  By the same process, decomposing $R_{k,m} \subset H_kW_m$ according to powers of $h_k$, we get a recurrence relation:
\[P(R_{k,m}) = P(h_k)^{p^m}P(R_{k,m+1}) + \bigl(\Sigma_{r=0}^{p^m-1}P(h_k)^r\bigr)P(R_{k+1,m})\]
and $P(R_{k,m})$ is the unique solution. 

Normalizing, define 
\[f_{k,m} = P(R_{k,m})P(H_k)^{-1}P(W_m)^{-1}.\]
We get 
\[f_{k,m} = P(h_k)^{p^m}f_{k,m+1}(1-P(w_m)) + \bigl(\Sigma_{r=0}^{p^m-1}P(h_k)^r\bigr)f_{k+1,m}(1-P(h_k))\]
which becomes 
\[f_{k,m} = P(h_k)^{p^m}f_{k,m+1}(1-P(w_m)) + (1-P(h_k)^{p^m})f_{k+1,m}.\]

Now let $f_{k,m} = P(H_{k+m})^{-1}$. Substituting into the recurrence relation  we get 

\begin{multline*}   P(H_{k+m})^{-1} =   P(h_k)^{p^m}P(H_{k+m+1})^{-1}(1-P(w_m)) + (1-P(h_k)^{p^m})P(H_{k+m+1})^{-1}\\
\dfrac{P(H_{k+m})^{-1}}{P(H_{k+m+1})^{-1}} =   P(h_k)^{p^m}(1-P(w_m)) + 1 - P(h_k)^{p^m} \\
1-P(h_{k+m}) = 1 -  P(h_k)^{p^m}P(w_m).
 \end{multline*}
This is true because $P(h_j) = s^{p^j}t^{p^j-1}$ and $P(w_j)=t^{p^j-1}$ by definition.

\end{proof}

 \begin{proposition}\label{exactness-of-U-non-Landweber-exact-case}
 The Hopf algebroids in \ref{reduction} are in the category $\mathcal{H}$. 
 \end{proposition}
 
 \begin{proof}
The flatness condition in Definition \ref{unstable-comodule-categories} is easy to check.  The exactness condition is harder.  Since all $K(n)_*$ and ${E_{n}}_{*}/I_{n}$-modules are free it is immediate that $U$ is exact in those cases.  
Now consider $(B,\Sigma) = (P(n)_*,  BP_*BP/I_{n})$.   The proof is essentially the same proof as in Proposition \ref{exactness-of-U-Landweber-exact-case}, using Theorem \ref{Boardman-basis-for-P(n)}, with the obvious modification of changing free $Z_{(p)}$-modules to $F_{p}$-vector spaces.   

Finally, $B=B(n)_*$ is obtained from $P(n)_*$ by inverting $v_n$, and since direct limits preserve exactness the result follows for $B(n)_*$ as well.  
 
\end{proof}

The following result is the first part of the proof of Theorem \ref{first-main-theorem}.

\begin{theorem}\label{first-part-of-first-main-theorem}  
Using the notation of Theorem \ref{first-main-theorem}, there is an isomorphism
\[    
\ext^{s}_{\mathcal{U}_{    \Gamma_{B}   }}(B[t],M)  
\cong \ext^{s}_{\mathcal{U}_{    {E_{n}}_{*}E_{n}/I_{n}  }}({E_{n}}_{*}/I_{n}[t],{E_{n}}_{*}/I_{n}\otimes_{B(n)_{*}} M      ).
\]
\end{theorem}

\begin{proof}    
It is proved in \cite{HOVSA} using an observation of N. Strickland regarding a result of Lazard's (see Theorem 3.4 and the proof of Theorem 3.1 there)  that the faithfully flat condition of Theorem \ref{unstable-faithflat-equiv} is satisfied for the first map in (\ref{reduction}).   The second map 
\[K(n)_*   \xrightarrow{}  {E_{n}}_{*}/I_{n}  \]
is a faithfully flat extension, so again Theorem  \ref{unstable-faithflat-equiv} applies.

\end{proof}

\noindent {\bf Remark}. In fact, this result can be generalized to an unstable version of Hovey-Sadofsky's change of rings theorem, since in \cite{HOVSA} they show that 
for $j \le n$ the map 
 \begin{equation}
(B(j)_*, \Gamma_{B(j)_*})  \xrightarrow{} (v_j^{-1}{E({n})}_{*}/I_{j},    v_j^{-1}{E({n})}_{*}E({n})/I_{j}), 
\end{equation}
satisfies the conditions of Theorem of \ref{unstable-faithflat-equiv}, but we will not use that in this paper.

%
%
\section{More on Unstable Comodules}

Now we give the description of unstable comodules in Morava $E$-theory that we are after.    Start by recalling from \cite{DEV} that there is a Hopf algebroid $(U,US)$ which is equivalent to $(BP_*,BP_*BP)$ and lies between  
$(BP_*,BP_*BP)$ and $(E_{n*},  E_{n*}E_n)$.   

Let $FGL_{p}$ be the groupoid valued functor on graded $p$-local algebras which assigns to an algebra $A$ the groupoid $FGL_{p}(A)$ whose objects are $p$-typical formal group laws over $A$, and whose morphisms are strict isomorphisms (\cite{RA2}).   Let ${FGL_{p}}_{*}$ be the groupoid valued functor on graded $p$-local algebras which assigns to an algebra $A$ the groupoid ${FGL_{p}}_{*}(A)$ whose objects are pairs $(F,a)$ where $F$ is a $p$-typical formal group law over $A$, $a\in A^{\times}$, and a morphism  $f:(F,a) \to (G,b)$  is an isomorphism from $F$ to $G$ with 
$a = f'(0)b$.   If $F$ is a $p$-typical formal group law over $A$, and $a$ is a unit in $A$, define a formal group law $F^{a}$ by $F^{a}(x,y) = a^{-1}F(ax,ay)$.
 
 Define graded algebras,  
\[U=\mathbf{Z}_{(p)}[u_1,u_2,\dots][u,u^{-1}]\] 
and 
\[US = U[s_0^{\pm 1},s_1,s_2,\dots]\]
with $|u_i| = 0$, for $i\ge 1$, $|s_i| = 0$ for $i \ge 0$, and $|u|= -2$. 

There is a Hopf algebroid structure on $(U,US)$. There are isomorphisms of groupoid schemes 
\begin{align*}
\theta:(\spec BP_*,\spec BP_*BP) &\to FGL_{p}^{\text{opp}}\\
\theta_{*}:(\spec U,\spec US) &\to {FGL_{p}}_{*}^{\text{opp}}
\end{align*}
and a natural transformation 
\[(\spec U , \spec US ) \xrightarrow{\spec \lambda} (\spec BP_* , \spec BP_*BP ),\]
 which sends $(F,a)$ to the formal group law $F^{a^{-1}}$. This natural transformation is represented by the graded Hopf algebroid map 
 \[(BP_*, BP_*BP) \xrightarrow{\lambda} (U,US)\]
 given by
\begin{align*}
\lambda(v_i) &= u_i u^{-(p^i-1)} \\
\lambda(t_i)  &= s_i u^{-(p^i-1)} s_0^{-p^i}\
\end{align*}
See \cite{DEV} for more details and proofs of these assertions.

The map ${\lambda}$ is a faithfully flat extension of coefficient rings, hence by Hopkins' theorem induces an equivalence of comodule categories.  We want to identify the unstable comodule category.  Unstably it is preferable to use the generators for $BP_*BP$ given by $h_i = c(t_i)$ where $c$ is the canonical antiautomorphism.   In $US$ define $c_i= c(s_i)$ and note that 
\[c_0 = c(s_0) = s_0^{-1}\] and 
\begin{equation*}
\eta_{R}(u) = c(\eta_{L}(u)) = s_0u = c_0^{-1} u.
\end{equation*}

Morava $E$-theory is obtained from $(U,US)$ by killing off $u_i$ for $i>n$, setting $u_n=1$, completing with respect to the ideal $I_n=(p,u_1,u_2,\dots, u_{n-1})$, and tensoring with the 
Witt ring $W\mathbf{F}_{p^n}$.  We have 
\[(E_{n*},E_{n*}E_n) = (E_{n*},E_{n*}[c_0^{\pm 1},c_1,\dots]{\otimes}_{U} E_{n*}).\]
If we reduce modulo  $I_n$ then 
\begin{multline*}       
  (E_{n*}/I_{n},E_{n*}E_n/I_{n})   \\  
  = (\mathbf{F}_{p^n}[u,u^{-1}], \mathbf{F}_{p^n}[u,u^{-1}][c_0^{\pm 1},c_1,\dots]    {\otimes}_{U} E_{n*}        ).
\end{multline*}

Applying the canonical anti-automorphism $c$ to the map $\lambda$ we get 

\begin{align*}
 \lambda(h_i) &= c_i (s_0u)^{-(p^i-1)}c(s_0)^{-p^i} \\
 &= c_i s_0^{-(p^i-1)}u^{-(p^i-1)}s_0^{p^i} \\
 &= c_i u^{-(p^i-1)} s_0 \\
 &= c_i u^{-(p^i-1)} c_0^{-1} 
\end{align*}

Let $K=(k_1,k_2,\dots)$ be a finite sequence of non-negative integers and denote $h_{1}^{k_1}h_{2}^{k_2}\dots$ by $h^K$ and similiarly $c_{1}^{k_1}c_{2}^{k_2}\dots$ by $c^K$.  Also denote \[|K|=k_{1}(p-1)+k_{2}(p^2-1) + \dots\] and \[l(K) = k_1 + k_2 + \dots.\]  Then we have
\[\lambda(h^K) = c^K u^{-|K|}c_0^{-l(K)}.\]

 If $M$ is a $(BP_*,BP_*BP)$-comodule with coaction 
\[M \xrightarrow{\psi} BP_*BP \otimes_{BP_*} M\]
then for each $x\in M$ we have
\[  \psi(x) = \sum_K v_{K}h^{K} \otimes m_{K}\]
where the sum is indexed over sequences $K$.  The coefficient $v_{K}$ is just some element in $BP_*$.  For each term in the sum we make the following calculation.  Assume $m_{K}$ is even.

\begin{align*} 
\label{linchpin}\lambda(v_{K}h^{K}) \otimes m_{K}  &= u_{K}u^{-|v_K|/2} c^{K}u^{-|K|}c_0^{-l(K)} \otimes m_{K}u^{|m_{K}|/2} u^{-|m_{K}|/2} \\
&= u_{K}u^{-|v_K|/2}  c^{K}u^{-|K|}c_0^{-l(K)}\eta_{R}(u^{-|m_{K}|/2}) \otimes m_{K}u^{|m_{K}|/2} \\
&= u_{K}u^{-|v_K|/2}  c^{K}u^{-|K|}c_0^{-l(K)}(s_0u)^{-|m_{K}|/2} \otimes m_{K}u^{|m_{K}|/2} \\
&= u_{K}u^{(-|v_K|/2-|m_{K}|/2-|K|)}  c^{K}c_0^{-l(K) +|m_{K}|/2} \otimes m_{K}u^{|m_{K}|/2}\\
&= u_{K}u^{(-|v_K|/2-|m_{K}|/2-|K|)}  c^{K}c_0^{-l(K) +|m_{K}|/2}\otimes y
\end{align*}
where $|y|=0$.  In the case where $|m_k|$ is odd, multiply and divide on the right by $u^{(|m_{K}|-1)/2}$ resulting in $y$ on the right with $|y|=1$. 
This motivates the following definition. 

 \begin{definition}\label{non-negative-comodule}    Let $(A,\Gamma)$ denote either the Hopf algebroid $(U,US)$ or 
 $(E_{n*}, E_{n*}E_n)$.   Suppose $M$ is a free $A$-module.  Define $V_{\ge 0}(M)$ to be the sub-$A$-module of $\Gamma \otimes_{A} M$ spanned by elements of the form $\gamma \otimes y$ where $y$ is in degree $0$ or $1$, and 
$\gamma = c_0^{k_0}c^K$ with $k_0 \ge 0$.    
 Then $V_{\ge 0}$ defines an endofunctor on the category of free $A$-modules.  Extend this to an endofunctor on all $A$-modules as in Definition \ref{extend-U}.   Now $V_{\ge 0}$ is the functor of a comonad on $A$-modules. Call the coalgebras over $V_{\ge 0}$ the non-negative comodules.   
 
\end{definition}

Recall the category of unstable comodules $\mathcal{V}_{\Gamma}$ defined in \ref{unstable-comodule-categories-Landweber-exact}.  
\begin{proposition}
\label{non-negative-comodules}  
The categories $\mathcal{V}_{BP_*BP}$ and $\mathcal{V}_{US}$  are both equivalent to the category of non-negative $US$-comodules.  The category  $\mathcal{V}_{E_{n*}E_n}$ is equivalent to the category of non-negative $E_{n*}E_n$-comodules. 
\end{proposition}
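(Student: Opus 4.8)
The plan is to prove the three equivalences in turn, the common engine being the degree computation carried out just before Definition~\ref{non-negative-comodule}, reread as an identification of the subobject $V(N)\subseteq\Gamma\otimes N$. \emph{First} I would dispose of $\mathcal{V}_{BP_*BP}\simeq\mathcal{V}_{US}$. Recall (see \cite{DEV}) that $US=U\otimes_{BP_*}BP_*BP\otimes_{BP_*}U$ and that $BP_*\to U$ is faithfully flat — indeed free, since $\lambda(v_i)=u_iu^{1-p^i}$ with $u$ a unit, so $U=BP_*[u^{\pm1}]$. Taking $C=U$ and $g=1\otimes\epsilon\colon U\otimes_{BP_*}BP_*BP\to U$, the composite $BP_*\xrightarrow{1\otimes\eta_R}U\otimes_{BP_*}BP_*BP\xrightarrow{g}U$ is the faithfully flat inclusion, so Theorem~\ref{unstable-faithflat-equiv} applies and $\alpha_*(M)=U\otimes_{BP_*}M$ is an equivalence $\mathcal{V}_{BP_*BP}\xrightarrow{\ \sim\ }\mathcal{V}_{US}$. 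This is the unstable refinement of the stable equivalence already recorded for $(BP_*,BP_*BP)\to(U,US)$.

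\emph{Next} I would identify $\mathcal{V}_{US}$ with the full subcategory of non-negative $US$-comodules. The forgetful functor $\mathcal{V}_{US}\to US\text{-comod}$ is full and faithful (a map of unstable comodules is by definition a map of comodules), and, $V_{US}(-)$ being a sub-comonad of $US\otimes_U(-)$, a $US$-comodule lies in its essential image exactly when its coaction factors through $V_{US}(N)\hookrightarrow US\otimes_U N$. By construction $V_{US}(N)=U\otimes_{BP_*}V_{BP_*BP}(N)$, and under the identification $US\otimes_U N=U\otimes_{BP_*}BP_*BP\otimes_{BP_*}N$ this is the $U$-submodule generated by the elements $\lambda(h^K)\otimes m$ with $2l(K)\le|m|$, $m$ homogeneous. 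Since $\lambda(h^K)=c^Ku^{-|K|}c_0^{-l(K)}$ and $\eta_R(u)=c_0^{-1}u$, moving the right-hand factor of such a term into degree $0$ or $1$ — exactly the computation before Definition~\ref{non-negative-comodule} — leaves $c_0$ raised to the exponent $\lfloor|m|/2\rfloor-l(K)$, and multiplication by elements of $U$ alters only the power of $u$; hence $V_{US}(N)$ is precisely the set of elements all of whose terms, so normalized, have non-negative $c_0$-exponent. As $2l(K)\le|m|$ is equivalent to $l(K)\le\lfloor|m|/2\rfloor$, the coaction of $N$ factors through $V_{US}(N)$ if and only if $N$ is non-negative, and the comonad-coalgebra identities for the lift are then automatic. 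Combined with the first step, this proves the first sentence.

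\emph{Finally}, for $\mathcal{V}_{E_{n*}E_n}$ the same argument runs essentially verbatim: the formulas $\lambda(h^K)=c^Ku^{-|K|}c_0^{-l(K)}$ and $\eta_R(u)=c_0^{-1}u$ persist in $E_{n*}E_n=E_{n*}[c_0^{\pm1},c_1,\dots]\hat{\otimes}_U E_{n*}$, and $V_{E_{n*}E_n}(N)$ is the correspondingly ($I_n$-adically) completed $E_{n*}$-submodule of $E_{n*}E_n\hat{\otimes}_{E_{n*}}N$ generated by the $\lambda(h^K)\otimes m$ with $2l(K)\le|m|$. A coaction may now be an infinite, $I_n$-adically convergent sum, but non-negativity is a condition on each term separately, so nothing changes and $\mathcal{V}_{E_{n*}E_n}$ is identified with the category of non-negative $E_{n*}E_n$-comodules.

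The step I expect to need the most care is the assertion in the second paragraph — and its completed analogue in the third — that $V_{US}(N)$, built from a free presentation of $N$ and then extended along $BP_*\to U$, really embeds into $US\otimes_U N$ with the stated generating set, so that ``non-negative $c_0$-exponent'' is a well-posed condition on an arbitrary (not necessarily free) comodule once the right-hand tensor factor has been normalized to degree $0$ or $1$. Everything else is the routine bookkeeping of the computation preceding Definition~\ref{non-negative-comodule}.
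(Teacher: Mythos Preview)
Your argument is correct and rests on the same engine as the paper's: the degree bookkeeping that turns the length inequality $2l(K)\le |m|$ into the non-negativity of the $c_0$-exponent once the right-hand tensor factor has been normalized to degree $0$ or $1$. The paper's own proof is a two-line appeal to exactly this computation, treating all three Hopf algebroids uniformly: one simply observes that in each case the comonad $V$ is defined using the generators $h_i$ (for $BP_*BP$) or their images $b_i=c_iu^{-(p^i-1)}c_0^{-1}$ (for $US$ and $E_{n*}E_n$), and that the unstable inequality on monomials in these generators is precisely the non-negativity condition. Your detour through Theorem~\ref{unstable-faithflat-equiv} to establish $\mathcal{V}_{BP_*BP}\simeq\mathcal{V}_{US}$ is valid but unnecessary here --- the paper instead identifies \emph{each} of $\mathcal{V}_{BP_*BP}$ and $\mathcal{V}_{US}$ directly with the non-negative category and lets the equivalence fall out. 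Your more explicit handling does have the virtue of making clear why the well-posedness concern you flag in your last paragraph is not an issue, since the faithfully flat machinery guarantees that $V_{US}$ behaves as expected on arbitrary modules.
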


\begin{proof}  For $BP$, by definition a $BP_*BP$-comodule that is free as a $BP_*$-module is unstable if the coaction on each element is in the $BP_*$-span of elements of the form $h^K\otimes m_K$ where 
$2l(K) \le |m_K|$.   For $(U,US)$ and $E_{n*}(E_n)$ the same condition applies, using the generators $b_i = c_i u^{-(p^i-1)} c_0^{-1}$ which are the images under $\lambda$ of $h_i$.  (Refer to \cite{BCM} or  \cite{BH}.)
The calculation above shows that $V_{\Gamma}(M) = V_{\ge 0}(M)$ for every $M$ that is free as an $A$-module.   Since this is true on free modules, it is true on all $A$-modules, and the conclusion for $(A,\Gamma)$ follows.  Also, $\mathcal{V}_{BP_*BP}$ is equivalent to $\mathcal{V}_{US}$ by Theorem \ref{unstable-faithflat-equiv} because the map $BP_*\to U$ is faithfully flat.
\end{proof}

\noindent {\bf Remark}.  This is an analog for height $n$ of a height $\infty$ result which is described in \cite{POW} (Theorem 4.1.4)  and \cite{BJ} (Section 4 and Appendix B).  It is classical that the dual Steenrod algebra is a group scheme which represents the automorphism group of the additive formal group law.   If one considers endomorphisms of the additive formal group law, not necessarily invertible, the representing object of this monoid scheme is a bialgebra, i.e. a 'Hopf algebra without an antiautomorphism'.  At the prime $2$ this is described explicitly in \cite{BJ} (see Section 4 and Appendix B).  Whereas the classical dual Steenrod is expressed as $\mathcal{S}_* = \mathbf{Z}/2[\xi_1,\xi_2,\dots]$,  the extended Milnor coalgebra  is 
$\mathcal{A} = \mathbf{Z}/2[a_{0}^{\pm 1},a_1,a_2,\dots]$.   One can see that there is an equivalence between the category of graded comodules over $\mathcal{S}_*$ and the category of comodules over $\mathcal{A}$.   One can also see that under this equivalence, the category of graded unstable comodules over $\mathcal{S}_*$ is equivalent to the category of 'positive' $\mathcal{A}$-comodules, i.e. comodules over the bialgebra $\mathcal{A}^{+} =  \mathbf{Z}/2[a_{0},a_1,a_2,\dots]$.   In \cite{POW} this result is extended to odd primes and generalized.
Our Proposition \ref{non-negative-comodules} is a version for the Landweber-Novikov algebra.  This goes back to \cite{MOR}.

We want to translate this to the 
$\mathbf{Z}/2$-graded case.   First consider what happens stably.  
Let $M$ be an $E_{n*}E_{n}$-comodule, and for simplicity assume for the moment $M$ is concentrated in even degrees. 

Let $((E_{n})_{0},(E_{n})_0(E_n))$ be the Hopf Algebroid of elements in degree $0$.  There is functor $M \mapsto M_{0}$ from the category of $E_{n*}E_{n}$-comodules to the category of $(E_{n})_0E_{n}$-comodules defined as follows:
As an  $(E_{n})_0$-module let $M_{0}$ be the elements in $M$ of degree $0$.  For $x\in M_{0}$ suppose the $E_{n*}E_{n}$-coproduct is given by $\psi_{M}(x) = \sum \gamma_i \otimes x_i$ and define the ungraded coproduct by 
\[ \psi_{M_{0}}(x) = \sum \gamma_i \eta_{R}(u^{-|x_i|/2})\otimes u^{|x_i|/2}x_i.\]
This defines an equivalence of categories from $E_{n*}E_{n}$-comodules to $(E_{n})_{0}E_{n}$-comodules and an isomorphism of Ext groups 
\[ \ext^{s,2t}_{E_{n*}E_{n}}(A,M) \xrightarrow{\cong}   \ext^{s,0}_{E_{n*}E_{n}}(A[2t],M) \xrightarrow{\cong} 
\ext^{s}_{(E_{n})_0(E_{n})}(A[2t]_0, M_{0}).\]

There is an analogous statement for comodules with elements in odd degrees.  Combining the two cases we get the functor $M \to M_0 \oplus M_1$ to $\mathbf{Z}/2$-graded $(E_{n})_{0}E_{n}$-comodules.

Looking at $\mathbf{Z}/2$-graded, unstable comodules over the Hopf algebra
\begin{equation*}
  ((E_{n})_{0}/I_{n},(E_{n})_{0}E_n/I_{n})  
  = (\mathbf{F}_{p^n}, \mathbf{F}_{p^n}[c_0^{\pm 1},c_1,\dots]/   (c_i^{p^{n}}-c_i)        ),
\end{equation*}
denote this Hopf algebra by $(B_0,\Sigma_0)$.    The calculation above suggests that we consider the algebra 
$\mathbf{F}_{p^n}[c_0,c_1,\dots]/   (c_i^{p^{n}}-c_i)$.    The coproduct preserves the non-negativity of the exponent of $c_0$ and so this is a bialgebra.  It is not a Hopf algebra as there is no anti-automorphism.   Soon we'll identify this bialgebra explicitly as a co-monoid object in the category of algebras.     

\vspace{.2cm}

\noindent {\bf Remark}. The map
\[
\mathbf{F}_{p^n}[c_0,c_1,\dots]/   (c_i^{p^{n}}-c_i) \xrightarrow{\sigma'} \mathbf{F}_{p^n}[c_0^{\pm 1},c_1,\dots]/   (c_i^{p^{n}}-c_i),
\]
which corresponds to suspension, is not an injection because $c_{0}^{p^n-1} -1$ is in the kernel.  For example, consider the case $n=1$. The module  
$U_{E(1)_*E(1)}(E(1)_*(S^1)/p)$ contains the element $1 \otimes v_1\iota_1  - v_1\otimes \iota_1$ which is non-zero even though $\eta_R(v_1) = v_1 - ph_1$ because $h_1$ doesn't live on the $1$-sphere.   However this element suspends to zero in $E(1)_*E(1)\otimes_{E(1)_*} E(1)_*(S^1)/p$.  There is a map 
\[  U_{E(1)_*E(1)}(E(1)_*(S^1)/p) \xrightarrow{} \mathbf{F}_{p}[c_0,c_1,\dots]/   (c_i^{p}-c_i)\]
(see Proposition \ref{bialgebra-comonad} below) and this element goes to $c_{0}^{p-1} -1$.

Let  $\Sigma = {E_{n}}_{*}E_n/I_{n}$  and let $N$ denote an unstable $\Sigma$-comodule.

\begin{proposition}\label{bialgebra-comonad}  There are isomorphisms
\begin{align*}
\ext^{s}_{\mathcal{V}_{   \Sigma  }}&({E_{n}}_{*}/I_{n}[t], N  )            \\
& \cong    \ext^{s}_{     \mathcal{V}_{   \Sigma_{0}   }          }   (   (E_{n})_{0}/I_{n}[t] \oplus
  (E_{n})_{1}/I_{n}[t]    ,  N_{0}\oplus N_{1}      )       \\
&{\cong} 
 \ext^{s}_{                 \mathbf{F}_{p^n}[c_0,c_1,\dots]/   (c_i^{p^{n}}-c_i)           }(   (E_{n})_{0}/I_{n}[t] \oplus
  (E_{n})_{1}/I_{n}[t] ,N_{0}\oplus N_{1}).
 \end{align*}
 \end{proposition}
 
 \begin{proof}  We need to show that the comonad ${V}_{\Sigma_{0}}$ on the category of $\mathbf{Z}/2$-graded $\mathbf{F}_{p^n}$-modules is isomorphic to the comonad given by tensoring with the bialgebra $\mathbf{F}_{p^n}[c_0,c_1,\dots]/   (c_i^{p^{n}}-c_i)$.  
 
 It suffices to consider an $\mathbf{F}_{p^n}$-module $N_0$ which has rank one and is assumed to be in degree zero.  The degree one case is similar. Recall $V_{\Sigma_0}(N_0)$ is defined as a quotient of $V_{({E_n})_{0}E}(M)$, where $M$ is a free 
 ${(E_n)}_0$-module which maps onto $N_0$, as depicted in the following diagram.  We abbreviate $((E_n)_{0},(E_n)_{0}E_n)$ to $(A_{0},\Gamma_0)$ and $\mathbf{F}_{p^n}[c_0,c_1,\dots]/   (c_i^{p^{n}}-c_i)$ to 
 $\mathbf{F}_{p^n}[c_0,c_1,\dots]/{\sim}$.

\begin{center}
\begin{tikzpicture}
\node (a) at (0,0) {$\Gamma_0 \otimes_{A_0} M$};
\node (b) at (3,0) {$\Gamma_0 \otimes_{A_0} N_0$};
\node (c) at (6,0) {$\Sigma_0 \otimes_{A_0}  N_0$};
\node (d) at (10,0) {$\mathbf{F}_{p^n}[c_0^{\pm 1},c_1,\dots]/{\sim}\otimes N_0$};
\node (e) at (0,-2) {$V_{\Gamma_0}(M)$};
\node (f) at (3,-2) {$V_{\Gamma_0}(N_0)$};
\node (g) at (6,-2) {$B_{0}\otimes_{A_0}      V_{\Gamma_0}(N_0)  $};
\node (h) at (10,-2) {$\mathbf{F}_{p^n}[c_0,c_1,\dots]/{\sim}\otimes N_0$};
\path[->,font=\scriptsize,>=angle 90]
(a) edge node[above] {} (b)
(b) edge node[above] {$=$} (c)
(c) edge node[above] {$=$} (d)
(e) edge node[above] {} (f)
(f) edge node[above] {} (g)
(g) edge node[above] {$\pi$} (h);
\path[right hook->,font=\scriptsize,>=angle 90]
(e) edge node[above] {} (a);
\path[->,font=\scriptsize,>=angle 90]
(f) edge node[above] {} (b)
(g) edge node[left] {$\sigma$} (c)
(h) edge node[left] {$\sigma'$} (d);
\node (k) at (6,-4) {$   V_{\Sigma_0}(N_0)  $};
\path[->,font=\scriptsize,>=angle 90]
(k) edge node[left] {$=$} (g);
\end{tikzpicture}
\end{center}

The leftmost vertical map is an injection by definition, since $M$ is a free $A_{0}$-module.  The middle  vertical maps are not injections because $N_0$ has torsion.   The top middle horizontal map is an isomorphism because stably $I_n$ is an invariant ideal. By Proposition \ref{non-negative-comodules}  the stabilization map 
\[B_{0}\otimes_{A_0} V_{\Gamma_0}(N_0) \xrightarrow{\sigma} \mathbf{F}_{p^n}[c_0^{\pm 1},c_1,\dots]/{\sim}\otimes N_0\]
factors through a surjective map $\pi$.    We need to show that $\pi$ is injective.  It is sufficient to show that 
$\pi\vert_{\ker{\sigma}}:\ker{\sigma} \to \ker{\sigma'}$ is injective.   The kernel of $\sigma'$ has an $\mathbf{F}_{p^n}$-vector space basis consisting of the set of monomials $\mathcal{B}' = \{(c_{0}^{p^n-1} - 1)c^K\}$ indexed by finite non-negative sequences $K=(k_1,k_2,\dots)$ satsifying $k_i < p^n$.   The corresponding set of elements $\mathcal{B} = \{(c_{0}^{p^n-1}-1)c^K\}$ in $V_{\Gamma_0}(M)$ maps bijectively to $\mathcal{B}'$, and so the vector space span of the image of $\mathcal{B}$ in $V_{\Sigma_0}(N_0)$, call it $S$, is mapped isomorphically to $\ker{\sigma'}$.  We just need to check that $S$ is all of $\ker{\sigma}$.   If $x\in \ker{\sigma}$, write $x$ as the image of an element $y\in V_{\Gamma_0}(M)$.  We can assume that $y$ is a polynomial in the $c_i$'s with coefficients in the Witt ring since any terms containing elements in $I_n$ will map to zero in $V_{\Sigma_0}(N_0)$. Furthermore we can take the exponent of $c_0$ to be non-negative since $y$ satisfies the unstable condition.  Finally, we claim that all the exponents of the $c_i$'s in $y$ can be taken to satisfy $k_i < p^n$, because $c_i^{p^n} = c_i$ holds in $V_{\Sigma_0}(N_0)$.   Then, since by assumption the image of $y$ in $\Sigma_0 \otimes_{A_0}  N_0$ is zero and hence is in the span of $\mathcal{B'}$, this implies that $x \in S$. 

In order to verify the claim, we examine the relation $c_{i}^{p^n} = c_i$
which comes from  Ravenel's formula (A2.2.5 of \cite{RA2}):

 \begin{equation*}
 \Sigma^{F^*} h_{k}v_{j}^{p^k} = \Sigma^{F^*} h_{k}^{p^j}\eta_{R}(v_j).
 \end{equation*}
 This version of the formula is obtained by applying the Hopf Algebroid anti-automorphism to the formula in \cite{RA2}.
 The coefficients of the formal sum are polynomials in the $\eta_{R}(v)$'s.  We are interested in the relation $r$ which is given by the 
terms in dimension $|v_{n+i}|$.   It is readily verified by the unstable condition of Definition \ref{original-definition} that $r\otimes m$ is defined in $V$ when $|m|=2$.  This observation was made by Bendersky and is used in much of his work.

Recall that we are thinking of $N_0$ as the component in degree $0$ of a graded $\Sigma$-comodule $N$.  Let $\iota$ denote an $F_{p^n}$-module generator of $N_0$.   Then $u^{-1}\iota$ is a generator in dimension $2$.  We get 
\begin{align*}
&(v_{n+ i} + h_1v_{n + i -1}^{p} +h_{2}v_{n+i-2}^{p^2} + \dots + h_{n+i}p^{p^{n+i}}) \otimes   u^{-1}\iota\\
=&(\eta_{R}(v_{n+i})+h_{1}^{p^{n+i-1}}\eta_{R}(v_{n+i-1})+\dots +h_{n+i}p) \otimes   u^{-1}\iota\\
&+(-(\Sigma^{F^*}_{k+j < n+i}h_{k}v_{j}^{p^k})_{|v_{n+i}|} +(\Sigma^{F^*}_{k+j < n+i}h_{k}^{p^j}\eta_{R}(v_{j}))_{|v_{n+i}|})  \otimes   u^{-1}\iota
\end{align*}
The subscript $|v_{n+i}|$ in the third set refers to the terms in the sum that are in the appropriate dimension.

First consider $i=0$.   In $V_{\Sigma}(N)$ we have $v_{n}\otimes u^{-1}\iota = 1\otimes v_{n}u^{-1}\iota$. 
Then, using $v_n = u^{-(p^n-1)}$ and $\eta_{R}(u^{-1}) = u^{-1}c_0$, we get
\begin{align*}
u^{-(p^n-1)}\otimes u^{-1}\iota &= 1\otimes u^{-(p^n-1)}u^{-1}\iota\\
u^{-(p^n-1)}\eta_{R}(u^{-1})\otimes \iota &= \eta_{R}(u^{-p^n})\otimes \iota\\
u^{-p^n}c_0 \otimes \iota &= u^{-p^n}c_0^{p^n} \otimes \iota.
\end{align*}
Since multiplication by $u$ is an isomorphism  $N_{i} \cong N_{i-2}$ we conclude 

$c_0^{p^n} = c_0$ holds in $V_{\Sigma_0}(N_0)$. 

Now suppose $i>0$.   By 6.1.13 of \cite{RA2}  in $V_{\Sigma}(N)$ we get
\[h_{i}v_{n}^{p^i}\otimes u^{-1}\iota = h_i^{p^n}\eta_{R}(v_n) \otimes u^{-1}\iota.\]
Substituting $c_iu^{-(p^i-1)}c_0^{-1}$ for $h_i$,  and $u^{-(p^n-1)}$ for $v_n$,  we get $c_i^{p^n} \otimes \iota = c_i \otimes \iota$, 
and we conclude that $c_i^{p^n} = c_i$ holds in $V_{\Sigma_0}(N_0)$.

 \end{proof}

The next step is to interpret an unstable $(E_{n})_0(E_n)$-comodule in terms of a continuous action of the monoid $\End_n$.
The Galois group $\Gal$ acts on $E_{n*}$ by acting on the Witt ring and we have 
\break $E_{n*}^{\Gal} = \mathbf{Z}_p[[u_1,\dots,u_{n-1}]][u,u^{-1}]$.  (In \cite{DEV} this ring is denoted $E_{*}\hat{}$.)
According to Morava theory, after completing, there is an isomorphism of Hopf algebroids (Theorem 2.1 of \cite{DH04})

\begin{equation}\label{mapping-space}
(E_{n*}^{\Gal} , (E_{n*}^{\Gal} E_{n}^{\Gal} )_{I_n}^{\hat{}})\cong (E_{n*}^{\Gal} ,\text{Map}_c(S_n , W\mathbf{F}_{p^n})^{\Gal}\hat{\otimes}_{\mathbf{Z}_p} E_{n*}^{\Gal} ).
\end{equation}
The category of graded, complete comodules over this Hopf algebroid is equivalent to the category of continuous, filtered, Galois equivariant twisted $S_n-E_{n*}$ modules. See \cite{DEV}, \cite{DH04} for details.   

Mod $I_n$, in degree zero, the Hopf algebroid of equation \ref{mapping-space} becomes
\[\label{stable-mod-p-mapping-space}(\mathbf{F}_{p},\text{Map}_c(S_n , \mathbf{F}_{p^n})^{\Gal})
=(\mathbf{F}_{p},\mathbf{F}_{p}[c_0^{\pm 1},c_1,\dots]/(c_i^{p^n}-c_i)).\]

The explicit description of the group scheme of automorphisms of $\Gamma_n$ over an $\mathbf{F}_{p}$-algebra $k$ is as follows.   Let 
$D=\mathbf{F}_{p}[c_0^{\pm 1},c_1,\dots]/(c_i^{p^n}-c_i)$. 
In \cite{RA2} it is shown that every endomorphism of $\Gamma_n$, i.e. a power series $f$ satisfying
\[f(\Gamma_n(x,y)) = \Gamma_n(f(x),f(y)),\]  
has the form 
\[f(x)  = \sum_{i\ge 0}{}^{\Gamma_n} a_i x^{p^{i}}, \quad a_i\in k \] 
and this will be an automorphism if and only if $a_0 \in k^{\times}$.

For a ring map $h:D\to k$ let $h$ give the automorphism 
\[f(x)  = \sum_{i\ge 0}{}^{\Gamma_n} h(c_i) x^{p^{i}}.\]  
If we do not require the coefficient of $x$ to be a unit, then it is apparent that 
$\spec(\mathbf{F}_{p}[c_0,c_1,\dots]/(c_i^{p^n}-c_i))$  is the monoid scheme whose value on $k$ is the monoid of endomorphisms of $\Gamma_n$ over $k$.

\begin{proposition}\label{unstable-mod-p-mapping-space}
There is an isomorphism of bialgebras
\[
(\mathbf{F}_{p},\Map_c(\End_n , \mathbf{F}_{p^n})^{\Gal})
=(\mathbf{F}_{p},\mathbf{F}_{p}[c_0,c_1,\dots]/(c_i^{p^n}-c_i)).
\]
\end{proposition}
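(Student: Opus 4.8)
The plan is to run the computation that identifies the Morava stabilizer group scheme with (the degree-zero, mod-$I_n$ part of) $E_{n*}E_n$, due to Morava \cite{MOR} and worked out by Devinatz \cite{DEV}, but with automorphisms of $\Gamma_n$ replaced everywhere by arbitrary endomorphisms. The net effect is that one no longer inverts the leading coefficient $c_0$, so the resulting coalgebra carries a unit and counit but no antipode — which is exactly why the statement asserts an isomorphism of \emph{bialgebras}. I would begin by pinning down the profinite structure of $\End_n$. Set $D^{+} = \mathbf{F}_p[c_0,c_1,\dots]/(c_i^{p^n}-c_i)$ and $D^{+}_m = \mathbf{F}_p[c_0,\dots,c_m]/(c_i^{p^n}-c_i)$, so $\spec D^{+} = \invlim_m \spec D^{+}_m$ with each $D^{+}_m$ finite étale over $\mathbf{F}_p$. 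By the description recalled just before the statement, for an $\mathbf{F}_p$-algebra $k$ the $k$-points of $\spec D^{+}$ are exactly the endomorphisms $f(x)=\sum_{i\ge 0}{}^{\Gamma_n}a_i x^{p^i}$ of $\Gamma_n$ over $k$, and the quotient $D^{+}_m$ records only $a_0,\dots,a_m$. Taking $k=\mathbf{F}_{p^n}$ identifies $\End_n$ with $\invlim_m \Hom_{\mathbf{F}_p\text{-alg}}(D^{+}_m,\mathbf{F}_{p^n})$ as a profinite set, and I would check that this topology coincides with the $S$-adic (equivalently $p$-adic) topology on $\End_n = W\mathbf{F}_{p^n}\langle S\rangle/(S^n=p,\,Sw=w^{\sigma}S)$ used in \cite{DEV}, the $S$-adic filtration being precisely the filtration by number of determined coefficients.

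Next I would compute the mapping algebra. Since $x^{p^n}-x$ splits into distinct linear factors over $\mathbf{F}_{p^n}$, there is a natural isomorphism of $\mathbf{F}_{p^n}$-algebras $D^{+}_m\otimes_{\mathbf{F}_p}\mathbf{F}_{p^n}\cong \Map\bigl(\Hom_{\mathbf{F}_p}(D^{+}_m,\mathbf{F}_{p^n}),\mathbf{F}_{p^n}\bigr)$ sending $c_i$ to the function recording the $i$th coefficient of an endomorphism. Passing to the colimit over $m$ gives $\Map_c(\End_n,\mathbf{F}_{p^n})\cong D^{+}\otimes_{\mathbf{F}_p}\mathbf{F}_{p^n}=\mathbf{F}_{p^n}[c_0,c_1,\dots]/(c_i^{p^n}-c_i)$. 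The group $\Gal$ acts on $\Map_c(\End_n,\mathbf{F}_{p^n})$ exactly as in the automorphism case of \cite{DEV} (through its actions on $\End_n$ and on the target $\mathbf{F}_{p^n}$), and the same Galois/étale descent argument shows that forming $\Gal$-invariants and then applying $\hat{\otimes}\,\mathbf{F}_{p^n}$ recovers the whole algebra, i.e.\ $\Map_c(\End_n,\mathbf{F}_{p^n})^{\Gal}\hat{\otimes}\,\mathbf{F}_{p^n}\cong\mathbf{F}_{p^n}[c_0,c_1,\dots]/(c_i^{p^n}-c_i)$; the completed tensor is harmless because only one finite level enters at a time. Inverting $c_0$ returns the known isomorphism for $S_n=(\End_n)^{\times}$ quoted above, a consistency check I would use to make sure the normalizations match.

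Finally I would verify that the isomorphism respects the coalgebra structure. The coproduct on $\Map_c(\End_n,\mathbf{F}_{p^n})$ is dual to composition $\End_n\times\End_n\to\End_n$, while on the right-hand algebra the coproduct comes from substituting the universal endomorphism into itself:
\[
\sum_{k}{}^{\Gamma_n}\Delta(c_k)\,x^{p^{k}} \;=\; \sum_{i}{}^{\Gamma_n}(c_i\otimes 1)\Bigl(\sum_{j}{}^{\Gamma_n}(1\otimes c_j)\,x^{p^{j}}\Bigr)^{p^{i}}.
\]
Using that $\Gamma_n$ has coefficients in $\mathbf{F}_p$, so $u\mapsto u^{p^i}$ distributes over $\Gamma_n$, one checks this is exactly the formula dual to composition of power series; in particular $\Delta(c_k)$ involves no negative power of $c_0$, so the coproduct restricts from the $S_n$-coalgebra to our sub-bialgebra. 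The counit is evaluation at the identity endomorphism $x\mapsto x$, i.e.\ $c_0\mapsto 1$ and $c_i\mapsto 0$ for $i\ge 1$, and there is no antipode because $\End_n$ is only a monoid.

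The main obstacle is the bookkeeping in the last step together with the descent in the previous one: making the twisted, semilinear $\Gal$-action and the descent precise enough that $(-)^{\Gal}\hat{\otimes}\,\mathbf{F}_{p^n}$ genuinely returns $\Map_c(\End_n,\mathbf{F}_{p^n})$ and not merely an abstractly isomorphic algebra, and transporting the composition law on endomorphisms of $\Gamma_n$ correctly onto the $c_i$. Both amount to the computations of \cite{MOR} and \cite{DEV} for automorphisms with the invertibility of the leading coefficient deleted; the one genuinely new feature is that the resulting coalgebra is a bialgebra without antipode.
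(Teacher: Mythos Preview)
Your proposal is correct and follows the same approach as the paper, which simply observes that Devinatz's argument in Section~4 of \cite{DEV} (in particular equation~(4.14), which establishes the isomorphism one generator at a time) carries over verbatim to $\End_n$. You have essentially written out that argument in detail, making explicit the profinite structure on $\End_n$, the \'etale descent identifying $D^{+}_m\otimes_{\mathbf{F}_p}\mathbf{F}_{p^n}$ with a mapping space, and the bialgebra compatibility that the paper leaves implicit in its one-sentence citation.
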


\begin{proof}
The proof given in Section four of \cite{DEV} applies to $\End_n$ as well. In particular equation (4.14) of \cite{DEV} establishes the result one generator at a time. 
\end{proof}

So we are studying discrete left comodules over the discrete bialgebra 
\break $\Map_c(\End_n , \mathbf{F}_{p^n})^{\Gal}$.   Still following \cite{DEV},  given a left comodule $M$ with coaction 
\[ M \xrightarrow{\psi_M}  \Map_c(\End_n , \mathbf{F}_{p^n})^{\Gal} {\otimes} M \cong \Map_c(\End_n , \mathbf{F}_{p^n}    \otimes M         )^{\Gal} \]
define a right action of $\End_n$ on $\mathbf{F}_{p^n}    \otimes M$ by
\[ (a\otimes m)g = a\psi_M(m)(g).\]
Note that this is a right action.

\begin{proposition}
The functor $M \to \mathbf{F}_{p^n} \otimes M$ is an equivalence from the category of $\mathbf{Z}/2$-graded discrete $\Map_c(\End_n , \mathbf{F}_{p^n})^{\Gal}$-comodules to the category of discrete continuous  Galois equivariant right $\End_n$-modules.
\end{proposition}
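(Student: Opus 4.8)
The plan is to deduce this proposition from the corresponding stable (i.e. $\mathbf{Z}$-graded) equivalence stated in \cite{DEV}, Section 4, by the same dictionary that turns comodules over a coalgebra into modules over the dual algebra, together with the bialgebra identification of Proposition \ref{unstable-mod-p-mapping-space}. First I would recall that by Proposition \ref{unstable-mod-p-mapping-space} the object $\Map_c(\End_n,\mathbf{F}_{p^n})^{\Gal}$ is (as an $\mathbf{F}_{p^n}$-bialgebra) the polynomial bialgebra $\mathbf{F}_{p^n}[c_0,c_1,\dots]/(c_i^{p^n}-c_i)$ sitting inside its group-like localization $\mathbf{F}_{p^n}[c_0^{\pm 1},c_1,\dots]/(c_i^{p^n}-c_i)$, the coordinate ring of the automorphism group scheme. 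Since $\End_n$ is profinite, $\Map_c(\End_n,\mathbf{F}_{p^n})$ is a union of the finite-dimensional mapping algebras on finite quotients, and its continuous $\mathbf{F}_{p^n}$-linear dual is the completed monoid algebra $\mathbf{F}_{p^n}[[\End_n]] = \invlim \mathbf{F}_{p^n}[\End_n/\!\!\sim]$; the Galois fixed points $\Map_c(\End_n,\mathbf{F}_{p^n})^{\Gal}$ dualize correspondingly to the twisted (Galois-crossed) completed monoid algebra, exactly as in the invertible case in \cite{DEV}.

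Next I would set up the functor itself. Given a left comodule $M$ with coaction $\psi_M\colon M \to \Map_c(\End_n,\mathbf{F}_{p^n})^{\Gal}\hat\otimes M \cong \Map_c(\End_n,\mathbf{F}_{p^n}\otimes M)^{\Gal}$, the formula $(a\otimes m)g = a\,\psi_M(m)(g)$ given just above the statement defines a right $\End_n$-action on $\mathbf{F}_{p^n}\otimes M$; one checks the monoid axioms $(x g)g' = x(gg')$ and $x\cdot 1 = x$ directly from coassociativity and counitality of $\psi_M$, being careful that the product $gg'$ is composition in $\End_n$ and that the multiplication used on $\Map_c$ is the convolution product dual to the bialgebra comultiplication (this is where the ``twisted'' bookkeeping enters, and it is the same computation as the one carried out for $S_n$ in \cite{DEV}). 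The $\mathbf{Z}/2$-grading, completeness and filtration on $\mathbf{F}_{p^n}\otimes M$ are inherited termwise, and continuity of the action follows because each $m$ has image in $\Map_c$ of some finite quotient of $\End_n$. Conversely, given a continuous filtered Galois-equivariant right $\End_n$-module $N$, one recovers a comodule structure on the ``$\mathbf{F}_{p^n}$-divided'' object by dualizing the action map $N\otimes \mathbf{F}_{p^n}[[\End_n]] \to N$, i.e. $m \mapsto (g\mapsto mg)\in \Map_c(\End_n,N)^{\Gal}$; the hypotheses of continuity and filtration guarantee this lands in the completed tensor product rather than a naive one.

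Then I would verify that these two constructions are mutually inverse and natural, which is a formal consequence of the standard comodule/dual-module adjunction once one knows the relevant dual of $\Map_c(\End_n,\mathbf{F}_{p^n})^{\Gal}$ is the full twisted completed monoid algebra and that ``complete'' on the comodule side matches ``filtered continuous'' on the module side; this matching of completeness conditions is precisely what makes the dualization an equivalence rather than merely an adjunction, and it is handled exactly as in \cite{DEV}, Section 4, for the invertible case — the only change is to replace the group $S_n$ by the monoid $\End_n$ throughout, which is legitimate since Proposition \ref{unstable-mod-p-mapping-space} supplies the bialgebra in place of the Hopf algebroid and nothing in Devinatz's argument used invertibility of the coefficient $c_0$ beyond the passage from the group scheme to the monoid scheme. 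The main obstacle, and the point deserving the most care, is the duality bookkeeping for the Galois-fixed mapping algebra: one must check that $\Map_c(\End_n,\mathbf{F}_{p^n})^{\Gal}$ really is a coalgebra (equivalently that $\End_n$ is a profinite monoid so that $\Map_c$ is an ind-finite bialgebra) and that its continuous dual is the correct twisted completed monoid algebra whose module category is the target; granting the corresponding statement for $S_n$ from \cite{DEV} and the identification in Proposition \ref{unstable-mod-p-mapping-space}, everything else is a routine transcription.
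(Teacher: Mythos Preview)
Your proposal is correct and follows essentially the same approach as the paper: the paper's proof is simply the one-line observation that this is the $\End_n$-analog of Proposition 5.3 (mod $I_n$) of \cite{DEV} and that the same proof applies, which is precisely what you have spelled out in detail. Your expansion of the comodule/module dictionary and the remark that nothing in Devinatz's argument requires invertibility of $c_0$ is exactly the content the paper leaves implicit.
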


\begin{proof}
This is the $\mod I_n$, $\End_n$-analog of Proposition 5.3 of \cite{DEV}, and the same proof applies.  The $\End_n$-modules we are considering are modules over 
$\mathbf{F}_{p^n}$. 
\end{proof}

Now we complete the proof of Theorem \ref{first-main-theorem}.
\begin{proof}
Since the $\Gamma$-comodule $M$ is concentrated in odd degrees, 
\[\ext^{s}_{\mathcal{V}_{\Gamma_B}}(B[t], M) \cong \ext^{s}_{\mathcal{U}_{\Gamma_B}}(B[t], M).\]

By Propositions \ref{bialgebra-comonad} and  \ref{unstable-mod-p-mapping-space} it remains to prove that 
\begin{multline*}
\ext^{s}_{\Map_c(\End_n , \mathbf{F}_{p^n})^{\Gal}}((E_{n})_{1}[t]/I_{n} , M)   \\
\cong 
\ext^{s}_{\End_n}(  (E_{n})_{1}[t]/I_{n} , \mathbf{F}_{p^n} \otimes M     )^{\Gal}.
\end{multline*}
for a $(\mathbf{F}_{p},\Map_c(\End_n , \mathbf{F}_{p^n})^{\Gal})$-comodule.   
The group on the right is $\Gal$-equivariant continuous Ext over the monoid $\End_n$.

Again the proof is an adaptation of the proof given in \cite{DEV}.  The cohomology of $\End_n$ with coefficients in a right module $N$ can be defined by the cochain complex 
\[ C^k(\End_n;N) = \Map_c(\End_n \times \dots \times \End_n,N)\] with differential 
\begin{equation*} 
\begin{split}
df(g_1,\dots, g_{k+1}) &= f(g_2, \dots, g_{k+1}) \\
                                     & \quad + \sum_{j=1}^{k}(-1)^{j}(g_1,\dots,g_jg_{j+1},\dots, g_{k+1}) \\
                                     & \quad + (-1)^{k+1}f(g_1,\dots,g_k)g_{k+1}.
\end{split}
\end{equation*}
The cobar complex for $\Map_c(\End_n , \mathbf{F}_{p^n})^{\Gal}$ is isomorphic to $C^*(\End_n;N)^{\Gal}$, the only difference from \cite{DEV} being that we are interpreting the action as a right action. 
\end{proof}

%
%
\section{Cohomological Dimension}\label{cohomological-dimension}

The purpose of this section is to prove the following proposition, which implies Theorem \ref{second-main-theorem} of the introduction.

\begin{proposition}\label{morava-vanishing-theorem}  Suppose $M$ be a continuous Galois equivariant \break \mbox{$End_n-(E_{n})_0/I_n$}-module, concentrated in degree $1$. 
Suppose $(p-1) \nmid n$.  Then 
$  \ext^{s}_{\End_n}((E_{n})_{1}[t]/I_{n},M)^{\Gal}     = 0$ for $s > n^2+1$.
\end{proposition}

To begin, note that 
\[
(E_{n})_{1}[t]/I_{n} = \begin{cases}  \mathbf{F}_{p^n} \,\, \text{if $t$ is odd} \\ 0 \,\, \text{if $t$ is even} \end{cases}.
\]
The action of $\End_n$ depends on $t$.  Since 
\[\ext^{s}_{\End_n}(\mathbf{F}_{p^n},M)^{\Gal} \subset \ext^{s}_{\End_n}(\mathbf{F}_{p^n},M)\] 
(see \cite{DEV}), for purposes of studying the vanishing line we can disregard the action of the Galois group.

The proof will be based on a construction used by Bousfield - see for example \cite{BO7},  Subsection 3.1.   Here we carry out a version for the stabilizer group.  To begin, recall from \cite{BO7} that if $E$ is a monoid which possesses an 'absorbing element' $0$, in other words an element such that $0e = e0 = 0$ for all $e\in E$, then any $E$-module $M$ has a decomposition
\[ M = M_{\text{red}} \oplus M_{\text{fix}}\] 
where $M_{\text{red}} = \{ x\in M \,\vert\, x0 =0 \}$ and $M_{\text{fix}} = \{ x\in M \,\vert\, x0 = x \}$.   
Call a module {\em reduced} if $M = M_{\text{red}}$, and {\em trivial} if $M = M_{\text{fix}}$. 
Notice that  $M_{\text{fix}}  = \{ x\in M \,\vert\, xe = x \, \forall e \in E \}$.
Let $\mathcal{E}^{\text{dis}}$ denote the category consisting of discrete $\mathbf{F}_{p^n}$-modules with a continuous right action of $\End_n$, and let $\mathcal{E}^{\text{dis}}_{\text{red}}$ denote the full subcategory of reduced modules. 
Because the functor $M \mapsto M_{\text{red}}$ is right adjoint to the forgetful functor, and the forgetful functor takes monomorphisms to monomorphisms, it follows that if $I$ is injective in $\mathcal{E}^{\text{dis}}$, then $I_{\text{red}}$ is injective in $\mathcal{E}^{\text{dis}}_{\text{red}}$.

The monoid $E=\End_n$ has an absorbing element.  If  
$\mathbf{F}_{p^n}$ is trivial then 
$\ext^{s}_{\End_n}(  \mathbf{F}_{p^n} ,M)    = 0$ for $s>0$ because 
\[\Hom_{\End_n}( \mathbf{F}_{p^n} ,\,\,\,) = (\,\,\,)_{\text{fix}}\] 
is an exact functor. 
So we can assume that $\mathbf{F}_{p^n}$ is reduced, from which it follows that 
\[\ext^{s}_{\mathcal{E}^{\text{dis}}}(  \mathbf{F}_{p^n} ,M) 
=\ext^{s}_{\mathcal{E}^{\text{dis}}_{\text{red}}}(  \mathbf{F}_{p^n} ,M_{\text{red}}).\]
So we can assume without loss of generality that all of our $\End_n$-modules are reduced.

So far we have been considering Ext groups in the category of  discrete $\mathbf{F}_{p^n}$-modules with a continuous action of 
$\End_n$.  The following construction will require us to work in the category of $p$-profinite $\mathbf{F}_{p^n}$-modules with a continuous action of $\End_n$.  Pontryagin duality implies that these two categories of $\End_n$-modules are equivalent.   Note that Pontryagin duality takes $\mathbf{F}_{p^n}$-modules to $\mathbf{F}_{p^n}$-modules, right $\End_n$-modules to left $\End_n$-modules, and reduced modules to reduced modules.

Let $\mathcal{A}$ denote the category consisting of $p$-profinite $\mathbf{F}_{p^n}$-modules with a  continuous left action of 
$S_n$.   Let $\mathcal{E}$ denote the category consisting of $p$-profinite $\mathbf{F}_{p^n}$-modules with a  continuous left action of $\End_n$, and let $\mathcal{E}_{\text{red}}$ denote the full subcategory of 
$\mathcal{E}$ consisting of reduced modules.  There is an obvious forgetful functor $J:\mathcal{E} \to \mathcal{A}$.

\begin{definition}  We define a functor $\tilde{F}:\mathcal{A} \to \mathcal{E}$ as follows:  For an $S_n$-module $M$, let $\tilde{F}(M)$ be $M\times M \times M \dots$ as an abelian group.  For $g\in S_n$, $x=(x_1,x_2,\dots) \in \tilde{F}(M)$, define 
\[gx = (gx_1, g^{\sigma^{-1}}x_2,g^{\sigma^{-2}}x_3,\dots,g^{\sigma^{-(n-1)}}x_n,gx_{n+1}\dots).\]  
For the element $S\in \End_n$, define
$Sx = (0,x_1,x_2,\dots)$.   This defines a continuous $\End_n$ action on $\tilde{F}(M)$ as one can readily check the relation $Sgx = g^{\sigma}Sx$.
\end{definition}

\begin{proposition}\label{Sn-End-adjunction}  The functor $\tilde{F}$ takes values in $\mathcal{E}_{\text{red}}$, and is left adjoint to $J$ restricted to $\mathcal{E}_{\text{red}}$. \end{proposition}

\begin{proof} The unit of the adjunction $M\to J\tilde{F}(M)$ is given by $x\mapsto (x,0,0,\dots)$.  The counit of the adjunction $\tilde{F}J(N) \to N$ is given by 
\[(x_1,x_2,x_3,\dots) \mapsto x_1 + Sx_2 + S^2x_3 + \dots.\]  
which converges because $N$ is reduced.
\end{proof}

Proposition \ref{Sn-End-adjunction} says $\Hom_{\mathcal{E}_{\text{red}}}(\tilde{F}(M),N) \cong \Hom_{\mathcal{A}}(M,JN)$.  We would like a similar statement for $\ext$.

\begin{proposition}  The functors $\tilde{F}$ and $J$ are exact.
It follows that for all~$s$, $\ext^s_{\mathcal{E}_{\text{red}}}(\tilde{F}(M),N) \cong \ext^s_{\mathcal{A}}(M,JN)$.\end{proposition}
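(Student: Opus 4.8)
The plan is to establish the two claims in sequence: first exactness of the functors $\tilde F$ and $J$, then the isomorphism on $\ext$-groups as a formal consequence of exactness plus the adjunction of Proposition \ref{Sn-End-adjunction}. Exactness of $J$ is immediate, since $J$ merely forgets part of the module structure and a sequence in $\mathcal{E}$ is exact if and only if it is exact as a sequence of ($p$-complete) abelian groups, which is exactly the condition in $\mathcal{A}$. For $\tilde F$, I would observe that on underlying abelian groups $\tilde F(M)$ is the countable product $\prod_{i\ge 1} M$, and the $\End_n$-action is defined levelwise via the Galois twists $g^{\sigma^j}$ and the shift by $S$; none of this structure interferes with the abelian-group operations, so a short exact sequence $0\to M'\to M\to M''\to 0$ in $\mathcal{A}$ is carried to $0\to \prod M'\to \prod M\to \prod M''\to 0$, which is still exact because products are exact in abelian groups (and $p$-completeness is preserved by countable products). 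Hence $\tilde F$ is exact.

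Given exactness, the $\ext$-isomorphism follows by a standard argument: since $\tilde F$ is an exact left adjoint to $J$, its right adjoint $J$ preserves injectives. Therefore, for a chosen injective resolution $N\to I^\bullet$ in $\mathcal{E}$, the complex $J(I^\bullet)$ is an injective resolution of $JN$ in $\mathcal{A}$. Applying $\Hom_{\mathcal{A}}(M,-)$ to $J(I^\bullet)$ computes $\ext^s_{\mathcal{A}}(M,JN)$, while the adjunction isomorphism $\Hom_{\mathcal{A}}(M,J(I^\bullet))\cong \Hom_{\mathcal{E}}(\tilde F(M),I^\bullet)$ is an isomorphism of cochain complexes, and the right-hand side computes $\ext^s_{\mathcal{E}}(\tilde F(M),N)$. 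Taking cohomology gives $\ext^s_{\mathcal{E}}(\tilde F(M),N)\cong \ext^s_{\mathcal{A}}(M,JN)$ for all $s$, which is exactly the statement. (One could equally run the dual argument using that $\tilde F$, being exact, sends a projective resolution of $M$ to one of $\tilde F(M)$, provided $\mathcal{A}$ has enough projectives; but the injective version only needs enough injectives in $\mathcal{E}$, which is the safer hypothesis here.)

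The only genuine point requiring care—the likely main obstacle—is checking that $\tilde F$ really lands in $\mathcal{E}$ and behaves well with respect to the topology: one must confirm that the product $\prod_{i\ge 1}M$ with the defined action is a legitimate object of $\mathcal{E}$ (in particular that the $\End_n$-action is continuous, using that the action is built from the continuous $S_n$-action on each factor together with the locally nilpotent shift $S$, and that $S^n=p$ together with $p$-completeness make the relevant infinite sums converge), and that short exact sequences in these categories of $p$-complete topological modules are detected on underlying abelian groups. Once the categories $\mathcal{A}$ and $\mathcal{E}$ are understood to be abelian with exactness tested on underlying groups—which is implicit in the setup—the rest is formal homological algebra and I would present it as such without belaboring the diagram chase.
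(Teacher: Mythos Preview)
Your proposal is correct and follows exactly the standard approach one would expect; the paper itself dispenses with the proof in a single word (``Straightforward''), so you have simply filled in the details the paper leaves implicit. Your argument---exactness of $J$ as a forgetful functor, exactness of $\tilde F$ because it is a product on underlying groups, and then the Ext isomorphism via ``exact left adjoint $\Rightarrow$ right adjoint preserves injectives'' together with the adjunction on Hom---is the canonical one, and your caveat about continuity and $p$-completeness is appropriate caution though not elaborated in the paper.
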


\begin{proof}Straightforward.\end{proof}

Now we need a fundamental exact sequence.   For an object $M$ in $\mathcal{A}$ define an object $M'$ in $\mathcal{A}$ as follows.  Let $M' = M$ as $\mathbf{F}_{p^n}$-modules and for each $g\in S_n$, $x'\in M'$, let $gx' = g^{\sigma^{-1}}x$, where $x=x'$ and the expression on the right is the action on $M$.  If $N$ is an object in $\mathcal{E}_{\text{red}}$ there is a map $S:N \to N$.  If we think of $S$ as a map $S:(JN)' \to JN$ then one can check that $S$ is a morphism in $\mathcal{A}$.   Thus we can define $\tilde{F}(S)$.   Also  $S:\tilde{F}((JN)') \to \tilde{F}(JN)$ is a morphism in $\mathcal{E}$.   This gives a map 
\[\partial = \tilde{F}(S)-S:\tilde{F}((JN)') \to \tilde{F}(JN)\] in $\mathcal{E}_{\text{red}}$ and we have

\begin{proposition}  There is a SES in $\mathcal{E}_{\text{red}}$ 
\[0 \xrightarrow{} \tilde{F}((JN)') \xrightarrow{\partial} \tilde{F}(JN) \xrightarrow{} N \xrightarrow{} 0.\]
\end{proposition}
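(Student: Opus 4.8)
The plan is to exhibit the sequence explicitly as a complex and then check exactness at each of the three spots by direct computation, using the convergence provided by $p$-completeness and the relation $S^n = p$. First I would write out $\tilde F(JN)$ as $N \times N \times N \times \cdots$ with the two actions from the definition of $\tilde F$, and similarly $\tilde F((JN)')$ as $N \times N \times \cdots$ but with the $\sigma$-twisted $S_n$-action coming from $(JN)'$. The augmentation $\tilde F(JN) \to N$ is the counit of the adjunction, $(x_1,x_2,\dots) \mapsto x_1 + x_2 S + x_3 S^2 + \cdots$, which converges in $N$ because $N$ is $p$-complete and $S^n = p$. The map $\partial = \tilde F(S) - S$ should be computed componentwise: $\tilde F(S)$ applied to $(x_1,x_2,\dots)$ shifts by inserting a zero, $(0,x_1,x_2,\dots)$ — wait, more precisely one must track which $S$ is meant; here $S \colon \tilde F((JN)') \to \tilde F(JN)$ is the internal map $xS = (0,x_1,x_2,\dots)$ applied in $N$, and $\tilde F(S)$ is $\tilde F$ applied to the morphism $S\colon (JN)' \to JN$, which acts by multiplying each coordinate on the right by $S$ in $N$. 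So $\partial(x_1,x_2,\dots) = (x_1 S, x_2 S, \dots) - (0, x_1, x_2, \dots)$ — though I should double-check the sign placement and the twist bookkeeping carefully when I write it out.

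Next I would verify the three exactness statements. Surjectivity of $\tilde F(JN) \to N$: given $y \in N$, the element $(y,0,0,\dots)$ maps to $y$, so this is immediate. Exactness at $\tilde F(JN)$: one checks that the counit composed with $\partial$ is zero — this is essentially the telescoping identity $\sum (x_i S) S^{i-1} = \sum x_i S^i = \sum x_{i+1} S^{i+1}$ reindexed — and conversely, given $(x_1,x_2,\dots)$ in the kernel of the counit, one produces a preimage under $\partial$ by solving the recursion $x_1 S - 0 = x_1 S$ forces the first coordinate, etc., i.e. the preimage $(z_1,z_2,\dots)$ is obtained by $z_i = -(x_{i+1} + x_{i+2}S + \cdots)S^{-1}$ or some such partial-sum formula; the key point is that these partial sums converge $p$-adically. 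Injectivity of $\partial$: if $(x_1 S, x_2 S, \dots) = (0, x_1, x_2, \dots)$ then reading coordinates gives $x_1 S = 0$, $x_2 S = x_1$, $x_3 S = x_2$, and so on, hence $x_1 = x_{k+1} S^k$ for all $k$; since $S^k \to 0$ $p$-adically (as $S^n = p$) and $N$ is $p$-complete and Hausdorff, $x_1 = 0$, and inductively all $x_i = 0$.

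The final thing to confirm is that every map in sight is genuinely a morphism in $\mathcal{E}$, i.e. $\End_n$-equivariant (and in particular $S_n$-equivariant and compatible with the Galois twist). This is flagged as "readily checked" in the text preceding the proposition — that $S \colon (JN)' \to JN$ and $S\colon \tilde F((JN)') \to \tilde F(JN)$ are morphisms in the respective categories — so I would just invoke those remarks, noting additionally that $\tilde F(S)$ is a morphism because $\tilde F$ is a functor and $S$ is a morphism, and that the counit is a morphism because it is the counit of the adjunction in Proposition \ref{Sn-End-adjunction} — although strictly the counit there is $\tilde F J(M) \to M$ whereas here we want $\tilde F(JN) \to N$ with $N \in \mathcal{E}$, which is exactly the counit of the $(\tilde F, J)$-adjunction at the object $N$, so it is automatically $\End_n$-equivariant. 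The main obstacle, such as it is, will be the bookkeeping of the $\sigma$-twists and signs in $\partial$ — making sure the twisted action on $(JN)'$ is precisely what is needed so that $\tilde F(S) - S$ lands in $\tilde F(JN)$ with the correct $\End_n$-action — rather than any genuine difficulty; the convergence arguments are routine given $p$-completeness.
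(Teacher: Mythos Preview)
Your proposal is correct. The paper states this proposition without proof, so there is nothing to compare against; the direct verification you outline---checking that the counit is surjective, that $\epsilon\circ\partial$ telescopes to zero, that a preimage of $(y_1,y_2,\dots)\in\ker\epsilon$ is given by $x_i=-(y_{i+1}+y_{i+2}S+\cdots)$, and that injectivity follows from $p$-adic separatedness via $x_1\in\bigcap_k S^kN\subset\bigcap_m p^mN=0$---is exactly what is needed. One small sharpening: in your injectivity step, the phrase ``$S^k\to 0$'' is slightly loose since the element $x_{k+1}$ varies with $k$; the clean statement is that $x_1=x_{k+1}S^k$ lies in $S^kN$ for every $k$, hence in $\bigcap_m p^mN$, which vanishes by Hausdorffness---but you already flag Hausdorffness as the point, so this is cosmetic.
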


\begin{corollary} There is a LES for any pair of reduced $\End_n$-modules $N$ and $L$. 
\[\dots \to \ext^s_{\mathcal{E}}(N,L) \to \ext^s_{\mathcal{A}}(JN,JL)\to \ext^s_{\mathcal{A}}((JN)',JL)\to   \ext^{s+1}_{\mathcal{E}}(N,L) \to \dots  \]
\end{corollary}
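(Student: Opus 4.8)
The plan is to derive Corollary \ref{stable-unstable-LES} directly from the short exact sequence in the preceding proposition by applying the functor $\ext^*_{\mathcal{E}}(-,L)$ and then simplifying the terms using the two earlier adjunction/exactness results. First I would apply $\Hom_{\mathcal{E}}(-,L)$ to the SES
\[0 \to \tilde{F}((JN)') \xrightarrow{\partial} \tilde{F}(JN) \to N \to 0\]
to obtain a long exact sequence
\[\dots \to \ext^s_{\mathcal{E}}(N,L) \to \ext^s_{\mathcal{E}}(\tilde{F}(JN),L) \to \ext^s_{\mathcal{E}}(\tilde{F}((JN)'),L) \to \ext^{s+1}_{\mathcal{E}}(N,L) \to \dots,\]
which is valid provided $\mathcal{E}$ has enough injectives (or enough projectives, working covariantly), something I would note is standard for categories of continuous modules over a profinite monoid, or can be taken as part of the setup in which $\ext$ was defined.

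Next I would rewrite the two middle terms. By Proposition \ref{Sn-End-adjunction} together with the fact that $\tilde{F}$ and $J$ are exact, we have the derived adjunction isomorphism $\ext^s_{\mathcal{E}}(\tilde{F}(M),L) \cong \ext^s_{\mathcal{A}}(M,JL)$ for any $M$ in $\mathcal{A}$: exactness of $\tilde{F}$ means it carries projective resolutions in $\mathcal{A}$ to projective resolutions in $\mathcal{E}$ (or, dually, one checks $J$ preserves injectives because its left adjoint $\tilde F$ is exact), so the $\Hom$-level adjunction prolongs to all $\ext^s$. Applying this with $M = JN$ gives $\ext^s_{\mathcal{E}}(\tilde{F}(JN),L) \cong \ext^s_{\mathcal{A}}(JN,JL)$, and with $M = (JN)'$ gives $\ext^s_{\mathcal{E}}(\tilde{F}((JN)'),L) \cong \ext^s_{\mathcal{A}}((JN)',JL)$. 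Substituting these into the long exact sequence yields exactly the sequence claimed in the corollary.

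The only point requiring a little care — and what I expect to be the main obstacle — is justifying that the adjunction $\Hom_{\mathcal{E}}(\tilde{F}(M),N) \cong \Hom_{\mathcal{A}}(M,JN)$ upgrades to an isomorphism of $\ext$-groups in all degrees. This is precisely the content of the second proposition above (``The functors $\tilde{F}$ and $J$ are exact. It follows that for all $s$, $\ext^s_{\mathcal{E}}(\tilde{F}(M),N) \cong \ext^s_{\mathcal{A}}(M,JN)$''), so I would simply invoke it; the substantive work has already been done there. Everything else is the formal machine of turning a short exact sequence into a long exact $\ext$-sequence and relabeling terms, so the proof is short. One should also confirm that $\partial$ and the other maps in the SES are genuinely morphisms in $\mathcal{E}$ (the relation $xSg = xg^\sigma S$ and the compatibility of $S$ with the $S_n$-action on the primed object), but this too is established in the run-up to the corollary.
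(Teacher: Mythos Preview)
Your proposal is correct and matches the paper's intended argument: the corollary is stated without proof precisely because it follows by applying $\ext^*_{\mathcal{E}}(-,L)$ to the short exact sequence of the preceding proposition and then invoking the derived adjunction $\ext^s_{\mathcal{E}}(\tilde{F}(M),L)\cong\ext^s_{\mathcal{A}}(M,JL)$ established just before. There is nothing further to add.
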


To finish the proof of Proposition \ref{morava-vanishing-theorem} we apply the preceding corollary to the case where $L$ is the Pontryagin dual of $(E_{n})_{1}[t]/I_{n} =  \mathbf{F}_{p^n}$ and $N$ is the Pontryagin dual of $M$.
The forgetful functor $J$, which restricts the action of $\End_n$ to the stabilizer group $S_n$, corresponds to suspension, and $JL$ is dual to $(E_n)_{1}(S^t)/I_{n}$ stably, as an $S_n$-module.   The $S_n$-module $(E_n)_{1}(S^t)/I_{n}$ may or may not have the trivial action, depending on $t$.   However we have an extension
\[1 \to {S'}_n \to S_n \to  \mathbf{F}_{p^n}^{\times} \to 1\]
where ${S'}_n$ is the $p$-Sylow subgroup of $S_n$, i.e. ${S'}_n$ is the group of strict automorphisms of the Honda formal group law, which does act trivially on $(E_n)_{1}(S^t)/I_{n}$,
and the Lyndon-Hochschild-Serre spectral sequence collapses to give 
\[\ext^{0}_{\mathbf{F}_{p^n}^{\times}}((E_{n})_{1}[t]/I_{n},H^s({S'}_n;M))         \cong                 \ext^s_{S_n}((E_{n})_{1}[t]/I_{n},M).\]
Here $H^s({S'}_n;M)$ denotes group cohomology. For the stated values of $n$  and $p$, the $p$-Sylow subgroup ${S'}_n$ of the Morava stabilizer group has finite cohomological dimension equal to $n^2$ ( see for example \cite{MOR} or \cite{RA2})  and \ref{morava-vanishing-theorem} follows.

We will finish this section by sketching an outline of a second possible proof of Proposition \ref{morava-vanishing-theorem}.
This approach, which is conjectural,  because it presumes a Lyndon-Hochschild-Serre spectral sequence in the case of a monoid that is neither discrete nor profinite , may have a more intuitive appeal.

Since we can assume all modules under consideration are reduced, there is an isomorphism 
\[\ext^{s}_{\End_n}(\mathbf{F}_{p^n},M)  = \ext^{s}_{\End_n-\{0\}}(\mathbf{F}_{p^n},M).\]

Any element in $\End_n-\{0\}$ can be uniquely written in the form $gS^k$ where $g\in S_n$ and $k \ge 0$. 
 This gives a monoid isomorphism from $\End_n-\{0\}$ to the semidirect product $S_n \rtimes \mathbf{N}$, where $\mathbf{N}$ is the free monoid on one generator.

Apply the Lyndon-Hochschild-Serre spectral sequence to the extension
\[ 1 \to S_{n} \to S_{n} \rtimes N  \to \mathbf{N} \to 1\]
to get
\[\ext^p_{\mathbf{N}}(\mathbf{F}_{p^n},\ext^{q}_{S_n}(\mathbf{F}_p,M)) \Rightarrow \ext^{p+q}_{S_{n} \rtimes N}(\mathbf{F}_{p^n} ,M).\]
The group $S_{n}$ has finite cohomological dimension equal to $n^2$ as noted above.   Since the cohomological dimension of $\mathbf{N}$ is 1, the result follows.

%
%

\bibliography{rob}{}
\bibliographystyle{plain}

\end{document}